\documentclass[11pt,a4paper,%headinclude,footinclude
]{amsart}                 
\usepackage[T1]{fontenc}             
\usepackage[utf8]{inputenc}          \usepackage[english]{babel}       
\usepackage{graphicx}                      % 
\usepackage[font=small]{quoting}            %  
\usepackage{caption}  
\usepackage[colorlinks=true,linkcolor=blue]{hyperref}
\usepackage[top=1.3in,bottom=1.4in,left=1.0in,right=1.0in]{geometry}
\usepackage{verbatim}
\usepackage{color}
\usepackage{picture}
\usepackage{amsmath,amsthm,amsfonts,amssymb}
\usepackage{comment}
\usepackage{hyperref}
\hypersetup{colorlinks,linkcolor={blue},citecolor={blue},urlcolor={red}}  
\allowdisplaybreaks

\newdimen\AAdi%
\newbox\AAbo%
\def\AAk#1#2{\s_etbox\AAbo=\hbox{#2}\AAdi=\wd\AAbo\kern#1\AAdi{}}%
\def\AAr#1#2#3{\s_etbox\AAbo=\hbox{#2}\AAdi=\ht\AAbo\raise#1\AAdi\hbox{#3}}%
\font\tenmsb=msbm10 at 12pt
\font\sevenmsb=msbm7 at 8pt
\font\fivemsb=msbm5 at 6pt
\newfam\msbfam
\textfont\msbfam=\tenmsb
\scriptfont\msbfam=\sevenmsb
\scriptscriptfont\msbfam=\fivemsb
\def\Bbb#1{{\tenmsb\fam\msbfam#1}}

\newtheorem{thm}{Theorem}[section] 
 
\newtheorem{lem}[thm]{Lemma} 
 
\newtheorem{cor}[thm]{Corollary} 
 
\newtheorem{prop}[thm]{Proposition}

\theoremstyle{remark}

\theoremstyle{definition}

\newcommand{\beq}{\begin{equation}}
	\newcommand{\eeq}{\end{equation}}
\newcommand{\beqr}{\begin{eqnarray}}
	\newcommand{\eeqr}{\end{eqnarray}}
\newcommand{\ba}{\begin{array}}
	\newcommand{\ea}{\end{array}}

\allowdisplaybreaks
\begin{document}

	\newtheorem{pro}{Proposition}
	\newtheorem{defi}{Definition}
	\newcommand{\noi}{\noindent}
	\newcommand{\dis}{\displaystyle}
	\newcommand{\mint}{-\!\!\!\!\!\!\int}

	\def \bx{\hspace{2.5mm}\rule{2.5mm}{2.5mm}} \def \vs{\vspace*{0.2cm}}
	\def\hs{\hspace*{0.6cm}}
	\def \ds{\displaystyle}
	\def \p{\partial}
	\def \O{\Omega}
	\def \o{\omega}
	\def \b{\beta}
	\def \m{\mu}
	\def \l{\lambda}
	\def\L{\Lambda}
	\def \ul{u_\lambda}
	\def \D{\Delta}
	\def \d{\delta}
	\def \k{\kappa}
	\def \s{\sigma}
	\def \e{\varepsilon}
	\def \a{\alpha}
	\def \tf{\tilde{f}}
	\def\cqfd{%
		\mbox{ }%
		\nolinebreak%
		\hfill%
		\rule{2mm} {2mm}%
		\medbreak%
		\par%
	}
	\def \pr {\noindent {\it Proof.} }
	\def \rmk {\noindent {\it Remark} }
	\def \esp {\hspace{4mm}}
	\def \dsp {\hspace{2mm}}
	\def \ssp {\hspace{1mm}}
	
	\def \u{u_+^{p^*}}
	\def \ui{(u_+)^{p^*+1}}
	\def \ul{(u^k)_+^{p^*}}
	\def \energy{\int_{\R^n}\u }
	\def \sk{\s_k}
	\def \mo{\mu_k}
	\def\cal{\mathcal}
	\def \I{{\cal I}}
	\def \J{{\cal J}}
	\def \K{{\cal K}}
	\def \OM{\overline{M}}

	\def\fk{{{\cal F}}_k}
	\def\M1{{{\cal M}}_1}
	\def\Fk{{\cal F}_k}
	\def\Fl{{\cal F}_l}
	\def\FF{\cal F}
	\def\Gk{{\Gamma_k^+}}
	\def\n{\nabla}
	\def\uuu{{\n ^2 u+du\otimes du-\frac {|\n u|^2} 2 g_0+S_{g_0}}}
	\def\uuug{{\n ^2 u+du\otimes du-\frac {|\n u|^2} 2 g+S_{g}}}
	\def\sku{\sk\left(\uuu\right)}
	\def\qed{\cqfd}
	\def\vvv{{\frac{\n ^2 v} v -\frac {|\n v|^2} {2v^2} g_0+S_{g_0}}}
	\def\vvs{{\frac{\n ^2 \tilde v} {\tilde v}
			-\frac {|\n \tilde v|^2} {2\tilde v^2} g_{S^n}+S_{g_{S^n}}}}
	\def\skv{\sk\left(\vvv\right)}
	\def\tr{\hbox{tr}}
	\def\pO{\partial \Omega}
	\def\dist{\hbox{dist}}
	\def\RR{\Bbb R}\def\R{\Bbb R}
	\def\C{\Bbb C}
	\def\B{\Bbb B}
	\def\N{\Bbb N}
	\def\Q{\Bbb Q}
	\def\Z{\Bbb Z}
	\def\PP{\Bbb P}
	\def\EE{\Bbb E}
	\def\F{\Bbb F}
	\def\G{\Bbb G}
	\def\H{\Bbb H}
	\def\SS{\Bbb S}\def\S{\Bbb S}
	
	\def\lcf{{locally conformally flat} }

	\def\circledwedge{\setbox0=\hbox{$\bigcirc$}\relax \mathbin {\hbox
			to0pt{\raise.5pt\hbox to\wd0{\hfil $\wedge$\hfil}\hss}\box0 }}
	%\ignorespaces%
	
	\def\sss{\frac{\s_2}{\s_1}}

    \def\intsn{\int_{{\mathbb S}^n}}
	
	\numberwithin{equation} {section}

	\date{}
	\title{ The $\s_k$-Yamabe problem revisited}
\keywords{ $\sigma_k$-Yamabe problem, fully nonlinear equation, Yamabe minimizer }
\subjclass[2020]{Primary 53C21; Secondary 35J60, 58J05}
	%[]
	\author{Yuxin Ge}
	\address{Institut de Math\'ematiques de Toulouse,\\
		Universit\'e Paul Sabatier,\\
		118, route de Narbonne,\\
		31062 Toulouse Cedex, France}
	\email{yge@math.univ-toulouse.fr}
	
		\author{Guofang Wang}%\thanks{The second named author is partly supported by SFB/TR71 of DFG}
	\address{ Albert-Ludwigs-Universit\"at Freiburg,
		Mathematisches Institut,
		Eckerstr. 1,
		D-79104 Freiburg, Germany}
	\email{guofang.wang@math.uni-freiburg.de}
	
	\author{Wei Wei} 
\address{School of Mathematics, Nanjing University, Nanjing 210093, P.R. China}
\email{wei\_wei@nju.edu.cn}
	
	\begin{abstract}
		In this paper we revisit the $\s_k$-Yamabe problem on $M^n$, namely, \emph{finding a conformal metric with constant $\s_k$-scalar curvature}. We prove that on a closed manifold $(M,[g_0])$ with positive Yamabe constant $Y_1(M,[g_0])>0$, the $\s_2$-Yamabe constant
        \[
  Y_2(M,[g_0]) := \inf_{g\in [g_0],\, R_g>0}\frac{\int_M \s_2(g)\,dvol(g)}{\mathrm{vol}(g)^{\frac{n-4}{n}}}
        \]
        is achieved by a conformal metric $g\in[g_0]$, which in particular solves the $\s_2$-Yamabe problem, assuming $Y_2(M,[g_0])>0$. As a consequence, for any $(M,g_0)$ with $Y_1(M,[g_0])>0$ and $Y_2(M,[g_0])>0$ one has
        \[
     \inf_{g\in [g_0],\, R_g>0}\frac{\int_M \s_2(g)\,dvol(g)}{\mathrm{vol}(g)^{\frac{n-4}{n}}}
     = \inf_{g\in [g_0],\, R_g>0,\, \s_2(g)>0}\frac{\int_M \s_2(g)\,dvol(g)}{\mathrm{vol}(g)^{\frac{n-4}{n}}}.
        \]
        We also show that these conclusions can fail if the condition $R_g>0$ is removed.
	\end{abstract}

	\maketitle
	\section{Introduction}
	
	Let $(M,g_0)$ be a compact Riemannian manifold with metric $g_0$, and let $[g_0]$ denote the conformal class of $g_0$.
	Let $A_g$ be the Schouten tensor of a metric $g$, defined by
	\[
	A_g=\frac{1}{n-2}\left(Ric_g-\frac{R_g}{2(n-1)}\,g\right).
	\]
	Here $Ric_g$ and $R_g$ denote the Ricci tensor and the scalar curvature of $g$, respectively.
	The role of $A_g$ in conformal geometry is reflected in the decomposition of the Riemann curvature tensor,
	\[
	Riem_g = W_g + A_g\circledwedge g,
	\]
	where $\circledwedge$ is the Kulkarni--Nomizu product; in particular, $g^{-1}\cdot W_g$ is conformally invariant.

	The {\it $\s_k$-scalar curvature} (or {\it $k$-scalar curvature}) is defined by
	\[
	\s_k(g):=\s_k\bigl(g^{-1}\cdot A_g\bigr),
	\]
	where locally $(g^{-1}\cdot A_g)^i{}_j = \sum_k g^{ik}(A_g)_{kj}$ and $\s_k$ denotes the $k$-th elementary symmetric function.
	Equivalently, for an $n\times n$ symmetric matrix $A$ with eigenvalues $\L=(\l_1,\dots,\l_n)$, we set $\s_k(A)=\s_k(\L)$.
	In particular, $\s_1(g)=\frac{1}{2(n-1)}R_g$ is a constant multiple of the scalar curvature.
	The quantity $\s_k(g)$, first studied by Viaclovsky \cite{ViacDuke}, is a natural generalization of scalar curvature.

	A central theme in differential geometry is to find ``good'' metrics within a given conformal class; metrics of constant $\s_k$-curvature provide one such notion.
	When $k=1$, this reduces to the classical Yamabe problem, namely the existence of a metric with constant scalar curvature,
	\begin{equation}
	\label{eq_Yamabe}
	R_g = \mathrm{const}.,
	\end{equation}
	in the conformal class $[g_0]$. 
This celebrated problem was solved by Schoen \cite{Schoen84}, building on work of Yamabe \cite{Yamabe}, Trudinger \cite{Trudinger}, and Aubin \cite{Aubin}; see the survey \cite{Lee-Parker}. In particular, they proved that the Yamabe constant
    \begin{equation}
        \label{Y1}
        Y_1(M,[g_0]) =\inf_{g\in[g_0]} \frac{\int_M R_g\, dvol(g)}{\mathrm{vol}(g)^{\frac{n-2}{n}}}
    \end{equation}
    is achieved by a conformal metric $g\in[g_0]$,  usually called a \emph{Yamabe metric}, and hence solves \eqref{eq_Yamabe}. In particular, when $Y_1(M,[g_0])>0$ the Yamabe functional may be minimized within the positive scalar curvature subclass:
    \beq
    \inf_{g\in[g_0]} \frac{\int_M R_g\,dvol(g)}{\mathrm{vol}(g)^{\frac{n-2}{n}}}
    =
    \inf_{g\in[g_0],\, R_g>0} \frac{\int_M R_g\,dvol(g)}{\mathrm{vol}(g)^{\frac{n-2}{n}}}.
    \eeq

    For $k\ge 2$, the (standard) $\s_k$-Yamabe problem asks whether there exists a conformal metric $g$ in the admissible class ${\cal C}_k([g_0])$ satisfying
	\begin{equation}\label{eq1}
		\s_k(g)=\mathrm{const},
	\end{equation}
	assuming that
	\begin{equation}\label{eq2}
		{\cal C}_k([g_0])\neq\emptyset.
	\end{equation}
	Here
	\[
	\mathcal C_k([g_0])=\bigl\{g\in [g_0]\,\big|\, \lambda(g^{-1}A_g)\in \Gamma_k^+\bigr\},
	\]
	and
\begin{equation}\label{eq0}
\Gamma_k^+=\Bigl\{\Lambda=(\l_1,\ldots,\l_n)\in \R^n\,\Big|\, \s_j(\Lambda)>0\ \text{for all } j\le k\Bigr\}
\end{equation}
	is the G\aa rding cone. The restriction \eqref{eq2} is natural: for $k\ge 2$ the equation \eqref{eq1} is fully nonlinear, and ellipticity holds precisely in the admissible region.

	Starting from the work of Viaclovsky \cite{ViacDuke} and Chang--Gursky--Yang \cite{CGY2002}, the $\s_k$-Yamabe problem has been studied extensively; see, for instance, \cite{GWDuke, GWCrelle, GVann, GVInvent, LLActa, LLCPAM2003, GWPisa, STW, TWCV, LiLuc}.
	The cases most relevant for the present paper are $k=2$ and the locally conformally flat setting.
	In these situations one can exploit a variational structure: the results of \cite{GWCrelle, GWPisa, STW} show that the (standard) Yamabe-type constant
    \beq
     \bar Y_k(M,[g_0]) := \inf_{g\in \mathcal{C}_k([g_0])}\frac{\int_M \s_k(g)\,dvol(g)}{\mathrm{vol}(g)^{\frac{n-2k}{n}}}
    \eeq
    is achieved by some $g\in \mathcal{C}_k([g_0])$, which then solves \eqref{eq1}.
In other words, in these cases the standard $\s_k$-Yamabe problem is solvable by a minimizer in $\mathcal{C}_k([g_0])$.

	In this paper we enlarge the admissible class and consider, for $k<\frac{n}{2}$ and $\mathcal{C}_{k-1}([g_0])\neq\emptyset$,
\beq
      Y_k(M,[g_0]) := \inf_{g\in \mathcal{C}_{k-1}([g_0])}\frac{\int_M \s_k(g)\,dvol(g)}{\mathrm{vol}(g)^{\frac{n-2k}{n}}}.
\eeq
This variant was first considered by Guan--Lin--Wang \cite{GLWMathZ}.

We can now state our main result.

\begin{thm}
	\label{mainthm}
	Let $n\ge 5$ and let $(M^n,g_0)$ be a compact Riemannian manifold with ${\cal C}_1([g_0])\neq\emptyset$ (equivalently, $Y_1(M,[g_0])>0$).
	If
	\begin{equation}\label{positive_case0}
		 Y_{2}(M,[g_0])>0,
	\end{equation}
	then $Y_2(M,[g_0])$ is achieved by a metric $g\in \mathcal{C}_2([g_0])$; in particular, $\mathcal{C}_2([g_0])\neq\emptyset$.
	Moreover, $\bar Y_{2}(M,[g_0])= Y_{2}(M,[g_0])$, i.e.,
\begin{equation}\label{eq0.3twoinvariantequ}
     \inf_{g\in [g_0],\, R_g>0 }\frac {\int _M \s_2(g)\, dvol(g)}{\mathrm{vol}(g)^{\frac{n-4}{n}}}
     =
     \inf_{g\in [g_0],\, R_g>0,\, \s_2(g)>0}\frac {\int _M \s_2(g)\, dvol(g)}{\mathrm{vol}(g)^{\frac{n-4}{n}}}.
\end{equation}
	Conversely, if $\mathcal{C}_2([g_0])\neq\emptyset$, then \eqref{positive_case0} holds.
\end{thm}

Theorem \ref{mainthm} answers a question raised by J.~Case. A natural follow-up is whether one can study the $\s_2$-Yamabe problem \eqref{eq1} on the entire conformal class $[g_0]$, without assuming $R_g>0$.
Our examples  indicate that the associated minimization problem is ill-posed in that generality: one always has
\[
\inf_{g\in[g_0]}\frac{\int_M \s_2(g)\,dvol(g)}{\mathrm{vol}(g)^{\frac{n-4}{n}}}=-\infty.
\]
See Section \ref{AppendixA} for details.

%%%%%%%%%%%%%

Next assume that $3\le k\le n/2$ and that $(M,g_0)$ is locally conformally flat.
In this setting we also solve the $\s_k$-Yamabe problem in the larger cone $\mathcal{C}_{k-1}([g_0])$.
	
	\begin{thm}
		\label{mainthm2} Let  $3\le k \le n\slash 2 $    and let $(M^n,g_0)$ be a compact  locally conformally flat Riemannian
		manifold with ${\cal C}_{k-1} {([g_0])}\not=\emptyset$.
		Assume that 
\begin{equation}\label{cond+k}
			Y_{k}(M,[g_0])>0.
		\end{equation}
		Then equation \eqref{eq1} admits a solution $g\in \mathcal C_k([g_0])$, which is a minimizer of $Y_k(M ,[g_0])$ in $\mathcal{C}_{k-1}([g_0]).$ In particular, 
		\begin{equation}\label{s+k}
			{\cal C}_k{([g_0])}\not=\emptyset.
		\end{equation}
		Moreover,
		\beq\label{eq0.3bis_a}
		Y_{k}(M,[g_0])=\bar Y_{k}(M,[g_0]).
		\eeq
		
       Condition \eqref{cond+k} is necessary for \eqref{s+k}.
	\end{thm}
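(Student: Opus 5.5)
We would prove Theorem \ref{mainthm2} with a parabolic (or continuity) flow that decreases the functional $F_k(g)=\int_M\s_k(g)\,dvol(g)\,\mathrm{vol}(g)^{-\frac{n-2k}{n}}$ and stays inside $\mathcal C_{k-1}([g_0])$. In the locally conformally flat case, $F_k$ is the variational functional of $\s_k$: for $g=e^{-2u}g_0$ its first variation is $(n-2k)\int_M\s_k(g)\,\dot u\,dvol(g)$ up to a volume normalization, and $\mathcal C_{k-1}$ is preserved under suitable flows.

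First, start from any $g_1\in\mathcal C_{k-1}([g_0])$. Run a normalized flow of the form
\[
\partial_t u=\frac{\s_k(g)}{\s_{k-1}(g)}-r(t)
\]
(or $\log$-type variants), with $r(t)$ chosen so the volume is fixed. This is parabolic as long as $g\in\mathcal C_{k-1}$ and $\s_k/\s_{k-1}$ is concave there. Along the flow $F_k$ is nonincreasing. We then prove that $g(t)$ stays in $\mathcal C_{k-1}$ by a maximum principle applied to $\s_{k-1}$, using the Newton–Maclaurin inequalities. The sign of $\s_k$ can be negative initially, which is why the quotient with $\s_{k-1}$ is used. The hypothesis $Y_k>0$ forces $\int\s_k$ to stay positive, and we need to show that eventually $\s_k>0$ everywhere. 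Our plan here is to use the $L^1$-type bound on negative parts given by the energy and derive $\s_k>0$ at the limit.

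Second, obtain a priori estimates. In the locally conformally flat setting, we expect local $C^1$ and $C^2$ estimates for $\s_k$ and $\s_k/\s_{k-1}$ equations in $\Gamma_{k-1}^+$ via Guan–Wang type arguments. The main obstacle is the $C^0$ estimate, i.e.\ ruling out blow-up. Here we would carry out a blow-up analysis: any blow-up rescales to an entire solution on $\R^n$ which, by the Liouville theorem of Li–Li (extended to the cone $\Gamma_{k-1}^+$ with $\s_k$ = const $>0$), must be the standard bubble. This forces the energy to be at least $Y_k(\S^n)$.

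To exclude this, we would compare with the sphere. Either the test bubble shows $Y_k(M)<Y_k(\S^n)$ unless $M$ is conformal to the sphere, for instance via the developing map and Schoen's positive mass argument as in \cite{GWPisa,STW}. Alternatively, we would run the flow from an initial metric with energy below $Y_k(\S^n)$, using a gluing of a bubble with $g_1$. If the flow blows up at the energy threshold, we would appeal to a Liouville-type classification of the limit. We expect this step, combined with the admissibility issue of the negative part of $\s_k$, to be the hardest.

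Finally, with the estimates in hand, the flow converges subsequentially to a metric $g_\infty\in\mathcal C_{k-1}$ with $\s_k(g_\infty)=\mathrm{const}=F_k(g_\infty)\,\mathrm{vol}^{-2k/n}>0$, so $g_\infty\in\mathcal C_k$. Since the infimum is approached along a minimizing sequence of initial data, a standard argument shows $g_\infty$ attains $Y_k$. Then $Y_k\le \bar Y_k\le F_k(g_\infty)=Y_k$, which gives \eqref{eq0.3bis_a}.

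For necessity: if $\mathcal C_k\ne\emptyset$, then by the known result \cite{GWPisa,STW} $\bar Y_k>0$ is attained. We then show $Y_k>0$ using the Garding/Newton–Maclaurin inequality along the segment connecting any $g\in\mathcal C_{k-1}$ to a $\mathcal C_k$ metric, together with a Sobolev-type inequality $\int\s_k\ge c\,\mathrm{vol}^{(n-2k)/n}$ on $\mathcal C_{k-1}$. This uses the monotonicity of the flow: any $g\in\mathcal C_{k-1}$ with $F_k(g)\le 0$ would flow to a solution with nonpositive constant $\s_k$, contradicting the existence of a $\mathcal C_k$ metric via the maximum principle comparison.
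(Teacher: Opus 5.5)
Your central step fails: the flow $\partial_t u=\sigma_k/\sigma_{k-1}-r(t)$ neither decreases $F_k$ nor has the right stationary points. With the volume fixed, $r$ is the average of $q=\sigma_k/\sigma_{k-1}$, and the variational structure gives $\frac{d}{dt}\int_M\sigma_k\,dvol=-(n-2k)\int_M\sigma_k\,(q-r)\,dvol$. This has no sign. If $M$ splits into two regions of equal volume with $(\sigma_k,\sigma_{k-1})\approx(a,\alpha)$ and $(b,\beta)$, the integral is a positive multiple of $(a-b)(a/\alpha-b/\beta)$, which is negative for $(a,\alpha,b,\beta)=(2,10,1,1)$. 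Moreover, stationary points solve the quotient equation $\sigma_k=r\,\sigma_{k-1}$, not $\sigma_k=\mathrm{const}$, so your concluding step does not follow.

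The missing idea is to subtract the average \emph{inside} the numerator: $\partial_t u=e^{-2u}\frac{\sigma_k(g)-r}{\sigma_{k-1}(g)}+s$, with $r=\int_M\sigma_k\,dvol/\mathrm{vol}(g)$ and $s$ chosen so that $\int_M\partial_t u\,dvol=0$. Then $\int_M\sigma_k\,\partial_t u\,dvol=\int_M(\sigma_k-r)\,\partial_t u\,dvol=\int_M e^{-2u}(\sigma_k-r)^2\sigma_{k-1}^{-1}\,dvol\ge0$, so this is a weighted gradient flow of $F_k$. In terms of $W$ it reads $\partial_t u=\frac{\sigma_k(W)-\nu}{\sigma_{k-1}(W)}+s$ with $\nu=re^{-2ku}$, which is concave and elliptic on $\Gamma^+_{k-1}$ as long as $\nu>0$ (Lemma \ref{lem2.3}, plus concavity of $-\nu/\sigma_{k-1}$). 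This is exactly where $Y_k>0$ enters. It keeps $r$ bounded below, and the resulting $\nu>0$ term drives the maximum-principle proof that $\sigma_{k-1}$ stays positive (cf.\ Proposition \ref{positivity}). Stationary points satisfy $\sigma_k=r>0$, so they lie in $\mathcal C_k$ automatically, with no need to control negative parts of $\sigma_k$.

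Your compactness step (blow-up along the flow, a Liouville theorem, a strict inequality against the sphere via positive mass or bubble gluing) is left open, and the paper avoids it entirely. It runs the flow subcritically, with $r_\varepsilon=\int_M\sigma_k\,dvol/\int_M e^{2\varepsilon u}dvol$ and $s_\varepsilon$ preserving $\int_M e^{2\varepsilon u}dvol(g)\equiv1$. For $\varepsilon>0$ the local estimate of Theorem \ref{thmlocal1} depends subcritically on $\inf u$. It keeps $u$ within $O(1)$ of $\min u$ on a ball that would carry too much weighted volume if $\min u\to-\infty$. So the flow converges, $Y_\varepsilon$ is attained in $\mathcal C_k$, and $Y_\varepsilon=\bar Y_\varepsilon$. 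Letting $\varepsilon\to0$ (H\"older gives $Y_\varepsilon\ge Y_k(1+o(1))$) yields $Y_k=\bar Y_k$. The minimizer is then the known minimizer of $\bar Y_k$ in $\mathcal C_k$ from \cite{GWPisa,STW}, where the blow-up analysis has already been done.

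Finally, your necessity argument is circular. The inequality $\int_M\sigma_k\,dvol\ge c\,\mathrm{vol}^{(n-2k)/n}$ on $\mathcal C_{k-1}$ is precisely $Y_k>0$. The flow argument also presupposes a convergence that was derived from $Y_k>0$; when $F_k(g)\le0$ one has $\nu\le0$ and the structure above is lost. As in the proof of Theorem \ref{mainthm}, one instead uses that $\mathcal C_k\neq\emptyset$ yields positive quotient-type constants $\inf_{\mathcal C_{j-1}}\int_M\sigma_j\,dvol/(\int_M\sigma_{j-1}\,dvol)^{\frac{n-2j}{n-2j+2}}$ (cf.\ \cite{GLWJDG}), and chains them down to the Yamabe constant.
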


The implication \eqref{cond+k}\,$\Rightarrow$\,\eqref{s+k} was proved in \cite{GLWMathZ} via degree theory for fully nonlinear elliptic equations. In dimensions $n=3$ and $n=4$, related implications were established in \cite{CGY2003, GVJDG, GLWJDG, CatinoDjali2010} and led to several geometric applications. The new point in Theorem~\ref{mainthm2} is the existence of a minimizer under the positivity assumption \eqref{cond+k}, which in particular yields \eqref{eq0.3bis_a}. Since the proof follows the same strategy as for Theorem~\ref{mainthm} and since we have more general results in the forthcoming paper \cite{GWW}, we omit the details.

    %%%%%%%%%%%%%%%%%%%%%%%%%%%%

The study of $\s_k$-curvature has led to striking geometric consequences. For instance, Chang--Gursky--Yang \cite{CGY2003} proved a conformal sphere theorem in dimension $4$; see also the more recent developments in \cite{Chang-Gursky-Zhang}. Related results in dimension $3$ were obtained in \cite{GLWJDG, CatinoDjali2010}. We also refer to \cite{GWAdv, Lai, GeLiLian} for further conformal invariants and applications.
	Applications to sharp Sobolev and Moser--Trudinger type inequalities have been developed in \cite{CYCPAM, GWDuke}; see also \cite{GWW}. Regarding existence theory, the standard $\s_k$-Yamabe problem (i.e., within the cone $\mathcal{C}_k$) is known to be solvable for $k\ge \frac{n}{2}$; see \cite{GVann, TWCV, LiLuc}. Many other cases remain open.

\medskip
\noindent{\it Organization of the paper.}
In Section~2 we describe our idea of proof. Then we introduce a Yamabe-type heat flow \eqref{eq2.1} and establish its basic properties, including monotonicity along the flow in Section 3. This flow arises as the gradient flow of a perturbed functional \eqref{func-e}, which corresponds to a subcritical equation.
Section~4 is devoted to the proof of the convergence of the flow. %Theorem~\ref{mainthm} via the heat flow \eqref{eq2.1}. 
The key $C^2$ estimates are established in Subsection~\ref{C2estimates-nonconformal-flow}, and uniform parabolicity (equivalently, preservation of the positivity of $\s_1$) is proved in Subsection~\ref{Uniform parabolicity for non-conformal flow}. In Section~5 we obtain \eqref{eq0.3twoinvariantequ} by proving the equivalence of the subcritical Yamabe constants $Y_{\varepsilon}(M,[g_0])=\bar Y_{\varepsilon}(M,[g_0])$. Finally, in Section~6 we construct examples of metrics showing what can go wrong without the positivity of the scalar curvature.

    \bigskip
    
	\noindent{\bf Acknowledgments.}
We would like to thank J.~Case for raising the question that motivated us to revisit the $\s_k$-Yamabe problem.
Part of this work was carried out while W.~Wei was visiting the University of Freiburg, supported by an Alexander von Humboldt research fellowship; she thanks the Institute of Mathematics at the University of Freiburg for its hospitality.
W.~Wei is also partially supported by NSFC (Grant Nos.~12571218, 12271244).

\section{The  idea of proof}\label{Section:Yambe type flow}

%We  our ideas by considering (formally) the following conformal flow. 

We explain our main ideas for proving Theorems~\ref{mainthm} and~\ref{mainthm2}.

\subsection{The quotient equation}

As in our previous work \cite{GLWJDG, GWCAG}, we begin with a key observation: an appropriate {\it quotient equation} has better structural properties than the corresponding ``pure'' $\s_k$-equation.

\begin{lem}[\cite{GLWJDG}]\label{lem2.3}
For $1<k\le n$, set $F(W)=\frac{\s_{k}}{\s_{k-1}}(W)$ and $F^{ij}=\frac{\partial F}{\partial w_{ij}}$, where $W=(w_{ij})$. Then:
		\begin{itemize}
			\item[1)]  The matrix $(F^{ij})(W)$ is positive semidefinite for $W\in \Gamma_{k-1}^+$
			and is positive definite for $W \in \Gamma_{k-1}^+\backslash {\cal R}_1$, where ${\cal R}_1$ is the set of symmetric matrices of rank $1$.
			\item[2)] The function $F$ is concave in the cone $\Gamma_{k-1}^+$.
			When $k=2$, for all $W\in \Gamma_1^+$ and for all $R=(r_{ij})\in {\cal S}_n$, we have
			\beq\label{eq3.1}
			\ba{rcl}
			\ds\vs \sum_{i,j,k,l}\frac {\p^2}{\p w_{ij}\p w_{kl}}\left(\frac {\s_2(W)}{\s_1(W)}\right)r_{ij}r_{kl}
			&=&\ds -\frac{ \sum_{ij}(\s_1(W)r_{ij}- \s_1(R)w_{ij})^2 }{ \s_1^3(W)}.
			\ea
			\eeq
		\end{itemize}
	\end{lem}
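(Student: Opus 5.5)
Since $F=\s_k/\s_{k-1}$ is symmetric in the eigenvalues, I would work in the eigenvalue picture and reduce everything to standard facts about Gårding cones. For 1), diagonalize $W=\mathrm{diag}(\l_1,\dots,\l_n)$ at the point in question. Then $F^{ij}$ is diagonal with entries
\[
\frac{\p F}{\p\l_i}=\frac{\s_{k-1}(\L|i)\,\s_{k-1}(\L)-\s_k(\L)\,\s_{k-2}(\L|i)}{\s_{k-1}^2(\L)},
\]
where $(\L|i)$ means $\l_i$ is removed. Using $\s_m(\L)=\s_m(\L|i)+\l_i\s_{m-1}(\L|i)$, the numerator becomes
\[
\s_{k-1}(\L|i)^2-\s_k(\L|i)\,\s_{k-2}(\L|i).
\]
By Newton--Maclaurin applied to the $(n-1)$-tuple $(\L|i)$, this is $\ge 0$. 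Newton's inequality $\s_{m}^2\ge \s_{m-1}\s_{m+1}$ (suitably normalized) holds for arbitrary real vectors, and the normalization constants only improve it. If one prefers to avoid that, it also follows from $(\L|i)\in\Gamma_{k-2}^+$ whenever $\L\in\Gamma_{k-1}^+$.

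Strict positivity needs a separate argument. If the expression vanishes, equality in Newton's inequality forces all entries of $(\L|i)$ to be equal, or $\s_{k-1}(\L|i)=\s_k(\L|i)=0$. Tracing through the cone condition, the only degenerate configuration compatible with $\L\in\Gamma_{k-1}^+$ is when all $\l_j$ with $j\ne i$ vanish, i.e. $W$ has rank one. This case analysis is the main obstacle: one must rule out equality cases such as $(\L|i)$ having equal nonzero entries, which would give strict inequality after normalization unless $k-1$ and $n-1$ coincide.

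Concavity in 2) would be quoted from the standard fact that $(\s_k/\s_{k-1})$ is concave on $\Gamma_{k-1}^+$ (Huisken--Sinestrari / Caffarelli--Nirenberg--Spruck). Together with the eigenvalue-to-matrix transfer for symmetric concave functions, this gives concavity in $W$.

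For the $k=2$ formula I would compute directly. Write $\s_2(W)=\tfrac12(\s_1(W)^2-|W|^2)$, so
\[
F=\tfrac12\s_1-\frac{|W|^2}{2\s_1}.
\]
The first term is linear, so only the second contributes to the Hessian. Set $\phi(t)=|W+tR|^2/\s_1(W+tR)$ and differentiate twice at $t=0$, with $a=\s_1(W)$, $b=\s_1(R)$, $q=|W|^2$, $p=\langle W,R\rangle$, $r=|R|^2$. This gives
\[
\phi''=\frac{2r}{a}-\frac{4pb}{a^2}+\frac{2qb^2}{a^3}=\frac{2}{a^3}\,\bigl|aR-bW\bigr|^2 .
\]
Hence the second derivative of $F$ equals $-|aR-bW|^2/a^3$, which is exactly \eqref{eq3.1}. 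This identity also re-proves concavity for $k=2$ on $\Gamma_1^+$, where $a>0$.
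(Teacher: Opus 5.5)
The paper gives no proof of this lemma; it is quoted from Guan--Lin--Wang, so your argument has to stand on its own. Most of it does:
\begin{itemize}
\item the formula for $\partial F/\partial\lambda_i$ is correct;
\item the reduction of its numerator to $\sigma_{k-1}(\Lambda|i)^2-\sigma_k(\Lambda|i)\sigma_{k-2}(\Lambda|i)$ is correct;
\item semidefiniteness via Newton's inequality for arbitrary real tuples is correct;
\item your $k=2$ computation $\phi''(0)=\frac{2}{a^3}|aR-bW|^2$ checks out;
\item quoting concavity for general $k$ is no worse than what the paper does.
\end{itemize}

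\textbf{The gap is the strict-positivity step, and it cannot be closed as you intend.} For $k\ge 3$, your claim that the only degenerate configuration in $\Gamma_{k-1}^+$ is rank one is false, and so is part 1) as literally stated.

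Take $\Lambda=(1,\dots,1,0,\dots,0)$ with $k-1$ ones. Then $\sigma_j(\Lambda)=\binom{k-1}{j}>0$ for $j\le k-1$, so $\Lambda\in\Gamma_{k-1}^+$, and $W$ has rank $k-1\ge 2$. Yet for each $i$ with $\lambda_i=1$, the tuple $(\Lambda|i)$ has only $k-2$ nonzero entries. Hence $\sigma_{k-1}(\Lambda|i)=\sigma_k(\Lambda|i)=0$ and $\partial F/\partial\lambda_i=0$. For instance, $n=k=3$ and $\Lambda=(1,1,0)$ give $\nabla F=(0,0,1)$.

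What your case analysis actually yields is the following.
\begin{itemize}
\item Since $\sigma_{k-2}(\Lambda|i)>0$, and for $k<n$ the Newton constant $\binom{n-1}{k-1}^2/\bigl(\binom{n-1}{k-2}\binom{n-1}{k}\bigr)$ exceeds $1$, equality forces $\sigma_{k-1}(\Lambda|i)=\sigma_k(\Lambda|i)=0$. Your ``all entries equal'' alternative never produces equality.
\item The polynomial $\prod_{j\ne i}(1+\lambda_j t)$ has only real, nonzero roots. By Descartes' rule of signs, it cannot have two consecutive zero coefficients strictly between its constant and leading coefficients. So its degree, the number of nonzero $\lambda_j$ with $j\ne i$, is at most $k-2$.
\item Since $\sigma_{k-1}(\Lambda)>0$, this means $W$ has rank exactly $k-1$.
\item For $k=n$ the numerator is $\prod_{j\ne i}\lambda_j^2$, with the same conclusion.
\end{itemize}

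So the correct degenerate set is the rank-$(k-1)$ matrices. This coincides with $\mathcal R_1$ only when $k=2$, which is the only case the paper uses. There you can skip Newton altogether: $\sigma_1(x)^2-\sigma_2(x)=\tfrac12\bigl(\sigma_1(x)^2+|x|^2\bigr)$ vanishes iff $x=(\Lambda|i)=0$, i.e. iff $W$ has rank one.
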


Lemma \ref{lem2.3} was discovered by Pengfei Guan in collaboration with Changshou Lin and was used crucially in \cite{GLWJDG, GWCAG} to study the quotient equation
\begin{equation}\label{eq:2.2}
     \frac {\s_2(g)-\nu} {\s_1(g)} =c,
\end{equation}
for a given $\nu>0$. Lemma \ref{lem2.3} implies that \eqref{eq:2.2} is elliptic whenever $\s_2(g)$ is positive, negative, or changes sign. We also note that the concavity in Lemma \ref{lem2.3} appeared earlier in work of Huisken--Sinestrari (1999) \cite{HuiskenSine}.

Recently, Lemma \ref{lem2.3} has been further studied by Guan--Zhang \cite{GuanZhang}, who extended this observation to more general quotient equations, including a Krylov-type equation. Motivated by these developments, we introduce a new fully nonlinear conformal flow and prove a more general Sobolev-type inequality in \cite{GWW}, extending the Sobolev inequalities proved by Ge--Wang \cite{GWAdv} and Ge--Wang--Xia \cite{GWX}.

Our main idea for Theorem~\ref{mainthm} is as follows. We modify \eqref{eq:2.2} slightly, but crucially, and consider
 \begin{eqnarray} \label{eq:2.3}
          \frac {\s_2(g)-r(g)} {\s_1(g)} =s. 
 \end{eqnarray}
 Here $r(g)$ denotes the average of $\s_2(g)$, i.e.,
 \begin{equation}
     r(g)= \frac{1}{\mathrm{vol}(g)} \int_M \s_2(g)\, dvol(g).
 \end{equation}
 If $r(g)>0$, then \eqref{eq:2.3} is elliptic and concave by Lemma~\ref{lem2.3}.
 If \eqref{eq:2.3} has a solution, then integrating \eqref{eq:2.3} and using the definition of $r(g)$ yields $s=0$, and hence
 \[\s_2(g)=r(g),\]
 which is the desired constant-$\s_2$ condition.
Therefore, we reduce the problem to proving existence for the quotient equation \eqref{eq:2.3}. To obtain a solution of \eqref{eq:2.3}, we use a flow approach (see the next subsection).

\section{A perturbed Yamabe-type flow}

	We now focus on the case $k=2$ and $n\ge 5$. Formally, a Yamabe-type flow associated with the quotient equation \eqref{eq:2.3} is
\beq\label{YamabeFlow}
	\frac {du}{dt}= -\frac 12 g^{-1}\frac d {dt }g =\frac {\s_2(g)- r(g)}{\s_{1}(g)}-s(g),
\eeq
but this flow appears difficult to analyze in general. For background on the classical Yamabe flow, see Brendle \cite{Brendle_Y_flow}. For our purposes it suffices to study a suitable perturbed flow, introduced next.
	
%\subsection{A perturbed Yamabe-type flow}
Denote
\[
{\cal F}_2(g):=\int_M \s_2(g)\,dvol(g).
\]
	Fix $\e\in[0,1)$ and consider the perturbed volume
	\[
	{\cal F}_{0,\e}(g)=\int_M e^{2\e u}\, dvol(g),
	\]
	for $g=e^{-2u}g_0$.
	A direct computation gives the first variation
	\[
	\ba{rcl}
	\ds\vs \frac d{dt}{\cal F}_{0,\e}(g)
	&=&\ds \frac{n-2\e}{2}\int_M e^{2\e u }\,g^{-1}\cdot \frac d {dt }g \, dvol(g).
	\ea
	\]
	In particular, for $\e=0$,
	\[
	\ba{rcl}
	\ds\vs \frac d{dt}{\cal F}_{0}(g)
	&=&\ds \frac{n}{2}\int_M g^{-1}\cdot \frac d {dt }g \,dvol(g).
	\ea
	\]

	We now introduce a flow that preserves ${\cal F}_{0,\e}$ and is monotone with respect to ${\cal F}_2$:
	\beq\label{eq2.1}
	\frac {du}{dt}= -\frac 12 g^{-1}\frac d {dt }g =e^{-2u}\frac {\s_2(g)- r_\e(g)e^{2\e u}}{\s_{1}(g)}+s_\e(g),
	\eeq
	where $r_\e(g)$ and $s_\e(g)$ depend only on $t$ and are determined by
	\beq\label{eq2.2}
	r_\e(g) :=\frac {\int_M \s_2(g)\, dvol(g)}{\int_M e^{2\e u}\, dvol(g)}
	\eeq
	and
	\beq\label{eq2.3}
	\int_M e^{2\e u}\left\{e^{-2u}\frac {\s_2(g)- r_\e(g)e^{2\e u}}
	{\s_{1}(g)}
	+s_\e(g)\right\}dvol(g)=0.\eeq
	In particular, when $\e =0$, we have
	
	\beq\label{eq2.1bis}
	\frac {du}{dt}= -\frac 12 g^{-1}\frac d {dt }g =e^{-2u}\frac {\s_2(g)- r(g)}{\s_{1}(g)}
	+s(g),\eeq
	where $r(g)$ and $s(g)$ depend only on $t$ and are determined by
	\beq\label{eq2.2bis}r(g) :=\frac {\int_M \s_2(g) \,dvol(g)}
	{\int_M 1\, dvol(g)}
	\eeq
	and
	\beq\label{eq2.3bis}
	\int_M \left\{e^{-2u}\frac {\s_2(g)- r(g)}
	{\s_{1}(g)}
	+s(g)\right\}\,dvol(g)=0.\eeq
    We remark that \eqref{eq2.1bis} is closer in spirit to a heat-type flow than to a Yamabe-type flow.
    
	It is also useful to note that stationary points of \eqref{eq2.1} solve a perturbed $\s_2$-equation (rather than a quotient equation). Indeed, if $g$ is stationary for \eqref{eq2.1}, then
    \beq
    e^{-2u}\frac {\s_2(g)- r_\e(g)e^{2\e u}}{\s_{1}(g)}=-s_\e(g).\eeq
    Multiplying by $\s_1(g)$, integrating over $M$, and using the definition of $r_\e$ yields $s_\e(g)=0$. Hence $g$ satisfies
	\begin{equation}\label{eq_a1}
		\s_2(g) -r_\e (g) e^{2\e u}=0.
	\end{equation}
	Thus \eqref{eq2.1} produces solutions of a perturbed $\s_2$-Yamabe equation, in contrast with the quotient-equation approach in \cite{GLWJDG}. We make this precise in the following lemma.

\begin{lem}\label{lem2.2}
For any $\e\ge 0$, flow \eqref{eq2.1} preserves ${\cal F}_{0,\e}$ and is non-increasing for $\mathcal{F}_2$. 

In particular, $r_\e$ is non-increasing along the flow.
Moreover, if the flow converges to a metric $g=e^{-2u}g_0$, then $g$ satisfies \eqref{eq_a1}.
\end{lem}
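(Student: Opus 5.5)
The plan is to compute the time derivatives of ${\cal F}_{0,\e}$ and ${\cal F}_2$ along \eqref{eq2.1}. For ${\cal F}_{0,\e}$ we use the first-variation formula stated above. Since $g^{-1}\cdot\frac{d}{dt}g=-2n\,u_t$, it gives
\[
\frac{d}{dt}{\cal F}_{0,\e}(g)=-(n-2\e)\int_M e^{2\e u}u_t\,dvol(g).
\]
This vanishes exactly by the normalization \eqref{eq2.3} defining $s_\e$.

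For ${\cal F}_2$ we use the standard variational fact, valid for $n\ne 4$ and obtained from the divergence-free property of the Newton tensor $T_1(A_g)$ (so that integration by parts removes the Hessian terms):
\[
\frac{d}{dt}\int_M\s_2(g)\,dvol(g)=-(n-4)\int_M\s_2(g)\,u_t\,dvol(g)\qquad\text{for } g=e^{-2u}g_0.
\]
Next, substitute $u_t=e^{-2u}\frac{\s_2-r_\e e^{2\e u}}{\s_1}+s_\e$. By the definition \eqref{eq2.2} of $r_\e$ and by \eqref{eq2.3}, we have $\int_M r_\e e^{2\e u}u_t\,dvol(g)=0$. We may therefore replace $\s_2$ by $\s_2-r_\e e^{2\e u}$ in the integrand. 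The $s_\e$ term also drops out, because $\int_M(\s_2-r_\e e^{2\e u})\,dvol(g)=0$. What remains is
\[
\frac{d}{dt}{\cal F}_2=-(n-4)\int_M e^{-2u}\frac{(\s_2(g)-r_\e e^{2\e u})^2}{\s_1(g)}\,dvol(g)\le 0.
\]
The sign uses $n\ge5$ together with $\s_1>0$. Positivity of $\s_1$ is part of the setting: the flow lives in ${\cal C}_1$, and this is needed even to make sense of \eqref{eq2.1}.

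Monotonicity of $r_\e={\cal F}_2/{\cal F}_{0,\e}$ follows at once, since the denominator is constant and the numerator is non-increasing.

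For the convergence statement, suppose $u(t)\to u_\infty$ smoothly with $\s_1>0$ at the limit. The dissipation identity gives $\int_0^\infty\int_M e^{-2u}(\s_2-r_\e e^{2\e u})^2/\s_1\,dvol\,dt<\infty$ whenever ${\cal F}_2$ is bounded below, which holds under smooth convergence. Hence along a sequence $t_i\to\infty$ the integrand tends to zero. By smooth convergence, the limit metric therefore satisfies \eqref{eq_a1}. Equivalently, one can argue as in the stationary-point computation above: $s_\e\to0$, and the limit satisfies $\s_2-r_\e e^{2\e u}=0$.

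The main point to verify carefully is the variation formula for ${\cal F}_2$, specifically the coefficient $-(n-4)$ and the vanishing of the divergence terms. Everything else is bookkeeping with \eqref{eq2.2} and \eqref{eq2.3}.
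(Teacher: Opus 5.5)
Your proposal is correct and follows the paper's proof essentially line for line. Both arguments get preservation of ${\cal F}_{0,\e}$ from the normalization \eqref{eq2.3}, use $\frac{d}{dt}{\cal F}_2=-(n-4)\int_M\s_2\, u_t\,dvol(g)$, subtract $r_\e e^{2\e u}$ and kill the $s_\e$ term via \eqref{eq2.2}--\eqref{eq2.3} to reach the dissipation identity \eqref{eq2.4}, and let the dissipation vanish in the limit. One cosmetic slip: the paper's convention is $g^{-1}\cdot\frac{d}{dt}g=-2u_t$, a scalar as in \eqref{eq2.1}, not the trace $-2nu_t$; only the scalar reading makes the first-variation formula give your (correct) expression $-(n-2\e)\int_M e^{2\e u}u_t\,dvol(g)$.
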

\begin{proof}
By the definition of $s_\e(g)$ (cf. \eqref{eq2.3}), the normalization in \eqref{eq2.1} preserves ${\cal F}_{0,\e}$.
Moreover, using \eqref{eq2.1} together with the definitions of $r_\e$ and $s_\e$, we compute
\beq\label{eq2.4}\ba{rcl}
-\frac{2}{n-4}\frac{d}{dt}{\cal F}_2(g) &=& -\ds\int_M \s_2(g)\, g^{-1}\!\cdot\! \frac{d}{dt}g\, dvol(g)\\
&=&- \ds\vs \int_M \bigl(\s_2(g)-r_{\e}(g)e^{2\e u}\bigr)\, g^{-1}\!\cdot\! \frac{d}{dt} g\, dvol(g)\\
&=& \ds\vs 2\int_M \bigl(\s_2(g)-r_{\e}(g)e^{2\e u}\bigr)\left(e^{-2u}\frac {\s_2(g)- r_\e(g)e^{2\e u}}{\s_{1}(g)}
+s_\e(g) \right) dvol(g)\\
&=& \ds\vs 2\int_M e^{2u}\s_{1}(g)\left(e^{-2u}\frac{\s_2(g)-r_\e(g)e^{2\e u}}{\s_{1}(g)}\right)^2 dvol(g).
\ea\eeq
This shows that ${\cal F}_2$ (and hence $r_\e$) is non-increasing along the flow.
If the flow converges, then the right-hand side of \eqref{eq2.4} tends to $0$, which forces \eqref{eq_a1}.
\end{proof}

For $\e\in[0,1)$, define
\begin{equation}\label{func-e}
\tilde {\cal F}_{2,\e}(g)=({\cal F}_{0,\e})^{-\frac{n-4}{n-2\e}}\int_M \s_2(g)\, dvol(g).
\end{equation}
We then introduce the associated subcritical Yamabe constants
\begin{equation}\label{subcritical Yamabe constant in larger cone}
Y_\e(M,[g_0])=\inf_{g\in {\cal C}_1([g_0])}\tilde {\cal F}_{2,\e}(g),
\end{equation}
whereas
\begin{equation}\label{subcritical Yamabe constant}
\bar Y_\e(M,[g_0])=\inf_{g\in {\cal C}_2([g_0])}\tilde {\cal F}_{2,\e}(g).
\end{equation}
Clearly $Y_\e(M,[g_0])\le \bar Y_\e(M,[g_0])$.
The discussion above also shows that the flow \eqref{eq2.1} decreases the functional $\tilde{\cal F}_{2,\e}(g)$.

If $g$ is a stationary point of the flow, then $g$ satisfies the perturbed equation
\beq\label{eq2.5}
\s_2(g)=c e^{2\e u},
\eeq
for some constant $c>0$. We view \eqref{eq2.5} as a subcritical approximation of the constant-$\s_2$ equation
\beq\label{eq2.5.2}
\s_2(g)=c.
\eeq

We will show that, for any sufficiently small $\e>0$, the infimum $Y_{\e}(M,[g_0])$ is achieved by a metric
$g_\e=e^{-2u_\e}g_0\in {\cal C}_2([g_0])$, which in particular solves \eqref{eq2.5}. Moreover, we will prove that
\[
\lim_{\e\to 0} Y_\e=\bar Y_{2,0}
\]
and that
\begin{equation}\label{eq2.6}
Y_{2,0}(M,[g_0])=\bar Y_{2,0}(M,[g_0]).
\end{equation}
As mentioned in the Introduction, the achievement of $\bar Y_{2,0}(M,[g_0])$ in the cone $\mathcal C_2([g_0])$ was proved in \cite{STW, GWPisa}; in view of \eqref{eq2.6}, the corresponding minimizer also achieves $Y_{2,0}(M,[g_0])$.

\section{The convergence of flow \eqref{eq2.1} }

In this section we prove  the $C^2$ estimates and the preservation of the positivity of scalar curvature of  flow \eqref{eq2.1}. Both imply its convergence.
\subsection{\texorpdfstring{$C^2$}{C2} estimates for flow (\ref{eq2.1})}\label{C2estimates-nonconformal-flow}
In this subsection we establish a priori estimates for the flow \eqref{eq2.1}. Local estimates for this class of fully nonlinear conformal equations were first obtained in \cite{GWIMRN}.

We consider \eqref{eq2.1} with initial metric $g(0)=g_1\in {\cal C}_{1}([g_0])$. By Lemma \ref{lem2.3}, the equation is parabolic as long as $r_\e(g)>0$, which holds under our assumptions. Short-time existence follows from a standard fixed-point argument (see Appendix~A in \cite{GWAdv}). We now prove gradient and second derivative  estimates for this flow.

\begin{thm}\label{thmlocal1}
Assume that $n>4$ and $g(0)=g_1\in {\cal C}_{1}([g_0])$. Let $u$ be a solution of \eqref{eq2.1} in a geodesic ball $B_R\times[0,T]$ for $T<T^*$ and $R<\tau_0$, the injectivity radius of $M$. Then there is a constant
$C$ depending only on $(B_R,g_0)$ and independent of $T$ such that, for any $(x,t)\in B_{R/2}\times[0,T]$,
\begin{equation}\label{eq_thm1.1}
|\n u|^2+|\n^2 u|\le C\bigl(1+r_\e(g)e^{-(2-\e)\inf_{B_R\times[0,T]}u}\bigr).
\end{equation}
\end{thm}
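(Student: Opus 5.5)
We follow the maximum-principle strategy of \cite{GWIMRN} and \cite{GWAdv}, adapted to the quotient operator. With $g=e^{-2u}g_0$, write the Schouten tensor as
\[
A_g=\n^2u+du\otimes du-\frac{|\n u|^2}{2}g_0+A_{g_0}=:W,
\]
with all derivatives taken with respect to $g_0$. Then $g^{-1}A_g=e^{2u}g_0^{-1}W$, so $\s_2(g)=e^{4u}\s_2(W)$ and $\s_1(g)=e^{2u}\s_1(W)$. Hence \eqref{eq2.1} becomes
\[
u_t=\frac{\s_2(W)-r_\e e^{(\e-4)u}}{\s_1(W)}+s_\e(t)=G(W,u)+s_\e .
\]
Here $r_\e$ and $s_\e$ depend only on $t$, and $r_\e$ is nonincreasing by Lemma \ref{lem2.2}.

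Set $F=\s_2/\s_1$ and $F^{ij}=\p F/\p w_{ij}$. As long as $W\in\Gamma_1^+$, Lemma \ref{lem2.3} makes the equation parabolic with linearized operator $L=\p_t-G^{ij}\n_{ij}$. The coefficient $G^{ij}$ equals $F^{ij}$ plus the term $r_\e e^{(\e-4)u}\s_1^{-2}\delta_{ij}$, which is positive and therefore only helps. We will use two standard facts:
\[
\sum_i F^{ii}\ge c(n)>0,\qquad F\le \s_1 .
\]
The first holds because $\sum_iF^{ii}=\frac{(n-1)\s_1^2-2\s_2}{\s_1^2}\ge \frac{n-1}{n}$, using $\s_2\le \frac{n-1}{2n}\s_1^2$. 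The term we must control is $r_\e e^{(\e-4)u}/\s_1$. Once we have a bound $|\n^2u|+|\n u|^2\lesssim \s_1(W)+\dots$, this term contributes at the scale $\bigl(r_\e e^{(\e-4)u}\bigr)^{1/2}$. The right-hand side of \eqref{eq_thm1.1} is written as $r_\e e^{-(2-\e)\inf u}$, which we read as the product of $r_\e e^{(\e-4)u}$ with $e^{2u}$. This mismatch of scales is the expected bookkeeping, to be matched at the end.

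\textbf{Step 1: second derivatives via $\s_1$.} Since $W\in\Gamma_1^+$, the matrix $W+\frac{|\n u|^2}{2}g_0-du\otimes du-A_{g_0}$ has trace bounded by $\s_1(W)+C|\n u|^2+C$. This does not by itself bound $|\n^2u|$, because $\Gamma_1^+$ does not control the negative eigenvalues. We therefore bound the largest eigenvalue of $\n^2u$ from above and recover the lower bound through the trace. In this way $|\n^2u|\le C(\s_1(W)+|\n u|^2+1)$ reduces to an upper bound on $\s_1(W)$, or equivalently on $\Delta u$, together with a gradient bound.

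\textbf{Step 2: gradient estimate.} Take a cutoff $\rho$ supported in $B_R$ and apply the maximum principle to $H=\rho|\n u|^2$ on $B_R\times[0,T]$. At an interior maximum with $t>0$ we have $\n H=0$ and $LH\ge0$. Differentiating the equation gives
\[
\n_k u_t=G^{ij}\n_k W_{ij}+\p_u G\, u_k .
\]
The term $\p_uG\,u_k$ carries a favorable sign after multiplication by $u_k$, because
\[
\p_uG=(4-\e)r_\e e^{(\e-4)u}/\s_1>0 ,
\]
although it enters $u_t$ with the sign that must be checked. The key good term comes from $du\otimes du-\frac{|\n u|^2}{2}g_0$ inside $W$. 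It yields $G^{ij}(u_iu_j\text{-type})|\n u|^2$ quadratic terms which, combined with $\sum F^{ii}\ge c$ and the equation $\n H=0$, produce
\[
c\,\rho|\n u|^4\le C\bigl(1+r_\e e^{(\e-4)u}\cdot(\ldots)\bigr).
\]
Following \cite{GWIMRN}, we will use $\rho|\n u|^2e^{\beta u}$ or a similar weight if needed. At $t=0$ the bound is given by the initial metric $g_1$.

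\textbf{Step 3: second derivative estimate, the main obstacle.} Apply the maximum principle to $\rho(\Delta u+\frac{a}{2}|\n u|^2)$, or to $\rho\,\lambda_{\max}(\n^2u)$. We differentiate the equation twice and use the concavity of $F$ from Lemma \ref{lem2.3}, which gives $G^{ij,kl}\n W_{ij}\n W_{kl}\le0$ for the $F$-part. The hard point is that $G$ is not concave in $W$ because of the term $-r_\e e^{(\e-4)u}/\s_1(W)$, which is convex in $\s_1$. Its second derivative is
\[
-2r_\e e^{(\e-4)u}|\n\s_1|^2/\s_1^3 ,
\]
and this sign is favorable, so the $u$-dependence and the cross terms must be absorbed instead. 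For these we plan to use $\p_u G>0$ together with $\sum F^{ii}\ge c$, and the large term $\rho\,\s_1\sum G^{ii}|\n^2u|^2$, which comes from commuting derivatives and from the $du\otimes du$ structure. Bounding the residual by the gradient estimate of Step 2 then yields
\[
\rho\,\s_1(W)\le C\bigl(1+r_\e e^{(\e-4)\inf u}\cdot e^{2\sup u}\bigr).
\]
Finally we convert this to the stated form $r_\e e^{-(2-\e)\inf u}$. If this conversion fails, we will instead keep the estimate in the equivalent local form and adjust the normalization accordingly.
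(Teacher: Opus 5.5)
There is a genuine gap. It sits exactly where this estimate differs from the classical one: in the terms generated by $\nu:=r_\e e^{-(4-2\e)u}$. (Note $e^{2\e u}/e^{4u}=e^{(2\e-4)u}$, not $e^{(\e-4)u}$.) You get the sign of both of these terms wrong.

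First, the map $W\mapsto -\nu/\s_1(W)$ is concave, not convex. So the full operator $(\s_2(W)-\nu)/\s_1(W)$ is concave on $\Gamma_1^+$, with second derivative in direction $R$ at most $-2\nu(\mathrm{tr}\,R)^2/\s_1^3$. This strict negative term is what absorbs, by completing a square, the mixed term $2(4-2\e)\nu\, u_1\sum_i w_{ii,1}/\s_1^2$ that arises when the equation is differentiated twice.

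Second, the $u$-derivative $(4-2\e)\nu/\s_1>0$ is \emph{unfavorable}. After substituting $u_{lt}$ and $u_{11t}$, it leaves at the maximum point the term $-\rho\,(4-2\e)\frac{\nu}{\s_1(W)}\,(u_{11}+2|\n u|^2)$. No lower bound on $\s_1(W)$ is available at this stage; that is Proposition~\ref{positivity}, whose proof uses this theorem. So this term can blow up, and ``$\p_uG>0$ together with $\sum F^{ii}\ge c$'' cannot absorb it.

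The missing idea is that the extra $\nu\s_1^{-2}\delta_{ij}$ in the coefficients controls this term. You note that it ``only helps'' but never use it. Concretely, $\sum_i F^{ii}=(n-1)-\frac{n\s_2}{\s_1^2}+\frac{n\nu}{\s_1^2}\ge \frac{n-1}{2}+\frac{n\nu}{\s_1^2}$. With $Q=\rho(u_{11}+|\n u|^2)$, the good term $\rho(\sum_iF^{ii})u_{11}^2\gtrsim \frac{\nu}{\s_1^2}Q^2$ beats $\frac{\nu}{\s_1}Q$ when $Q/\s_1(W)$ is large. If instead $Q\le C\s_1(W)$, the bad term is at most $C\nu$, and one gets $Q\le C(1+\sqrt{\nu})$.

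Your test quantities also do not close in $\Gamma_1^+$. An upper bound on $\s_1(W)$ or $\D u$ controls only the trace of $\n^2u$. In $\Gamma_1^+$ the inequality $|\n^2u|\le C(\s_1(W)+|\n u|^2+1)$ fails (it holds in $\Gamma_2^+$), so ending Step~3 with ``$\rho\,\s_1(W)\le\dots$'' gives no Hessian bound. You need an upper bound on the largest eigenvalue, which you mention and then drop.

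Likewise, for $\rho|\n u|^2$ the only positive second-order term is $2\rho F^{ij}u_{li}u_{lj}$. You do not show how to extract $c|\n u|^4\sum_iF^{ii}$ from it when $F^{ij}$ is not uniformly elliptic. Nor do you show how to absorb the same bad $\nu/\s_1$ term, which appears there as well.

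The paper avoids a separate gradient step by maximizing $\rho\,(\n^2u+|\n u|^2g_0)(e,e)$ over unit vectors $e$ and times. The $-\frac{|\n u|^2}{2}g_0$ part of $W$ produces the good term $(\sum_iF^{ii})\sum_l u_{1l}^2\ge(\sum_iF^{ii})u_{11}^2$. Also, $\s_1(W)>0$ gives $u_{11}\ge|\n u|^2/20$ at a large maximum. So a single quantity bounds both $|\n u|^2$ and $|\n^2 u|$.

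Finally, your ``mismatch of scales'' is an artifact of the wrong exponent. With the correct $\nu$, $\sup_{B_R}\sqrt{\nu}\le\sqrt{r_\e}\,e^{-(2-\e)\inf u}$, which is the scale in \eqref{eq_thm1.1}, and no $\sup u$ should appear. This matters. Corollary~\ref{thmglobal1} derives the lower bound on $u$ by applying this estimate at a minimum point, before any upper bound on $u$ is known. A bound containing $e^{2\sup u}$ would be useless there, so ``adjusting the normalization'' is not an available fallback.
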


	%\noindent{\it Proof of Theorem \ref{thmlocal1}.}
\begin{proof}
The proof closely follows \cite{GWIMRN, GLWJDG}. Throughout the argument, $C$ (resp.~$c$) denotes a positive constant independent of $T$ that may change from line to line.
	Define
	\[
	\nu:=r_\e(g)e^{-(4-2\e)u},
	\qquad
	F(W,u):=\frac{\s_2(W)-\nu}{\s_1(W)}.
	\]
	Set
	\beq \ba{rcl}\ds\vs
	(F^{ij}(W,u))&:=&\ds \left(\frac{\partial F}{\partial w_{ij}}(W)\right)\\
	&=&\ds\vs \left(\frac{\s_1(W)T^{ij}-\s_2(W)\delta^{ij} +\nu\delta^{ij}}{\s_1^2(W)}\right) \ea \eeq
	where $(T^{ij})=(\s_1(W)\delta^{ij}-w^{ij})$ is the first Newton transformation associated with $W$, and $\delta^{ij}$ denotes the Kronecker symbol.
	By assumption, $\nu>0$. In view of Lemma \ref{lem2.3}, the matrix $(F^{ij})$ is positive definite and $F$ is concave for $W\in \Gamma_1^+$. Moreover,
	\begin{equation}\label{concave}
	\sum_{ijkl}\frac {\p^2\bigl( F(W,u)\bigr)}{\p w_{ij}\p w_{kl}} r_{ij}r_{kl} \le -2\frac{\nu (\sum_{i}r_{ii})^2}{ \s_1^3(W)}.
	\end{equation}
	Let $S(TM)$ denote the unit tangent bundle of $M$ with respect to the background metric $g_0$. Define a function $\tilde G:S(TM)\times [0,T]\to\R$ by
	\beq
	\tilde G(e,t)= (\n^2 u+|\n u|^2 g_0)(e,e).
	\eeq
	Without loss of generality, we assume $R=1$. Let $ \rho\in C^\infty_0(B_1)$ be a cut-off
	function defined as in \cite{GWIMRN} such that
	\begin{equation}\begin{array}{rcll}\label{eq4.1}
			\vs \ds \rho & \ge & 0, & \hbox{ in } B_1,\\
			\vs\ds \rho & =& 1, &\hbox{ in } B_{1/2},\\
			\vs\ds |\n \rho (x)|& \le &2 b_0 \,\rho^{1/2}(x), & \hbox{ in } B_1,\\
			|\n^2 \rho| & \le  & b_0,   & \hbox{ in } B_1.
	\end{array}\end{equation}
	Here $b_0>1$ is a constant. Since $e^{-2u}g_0\in {\cal C}_1$, it suffices to obtain an upper bound for
	$(\n^2 u+|\n u|^2 g_0)(e,e)$, uniformly for all $e\in S(TM)$ and all $t\in[0,T]$.
	To this end, set
	\[
	G(e,t):=\rho(x)\,\tilde G(e,t).
	\]
	Choose $(e_1,t_0)\in S(T_{x_0}M)\times(0,T]$ such that
	\beqr \label{eq4.2}
	\vs \ds G(e_1,t_0)=\max_{S(TM)\times [0, T]} G(e,t).\eeqr
	We may further assume that
	\beqr \label{eq4.4}
	\ds G(e_1,t_0)>n\max_{B_1} \s_1(g_0).
	\eeqr
	Let $(e_1,\dots,e_n)$ be an orthonormal basis at $(x_0,t_0)$.

	Choose normal coordinates around $x_0$ so that, at $x_0$, we have
	\[
	\frac{\p}{\p x_1}=e_1.
	\]
	Then $(x_0,t_0)$ is a maximum point of the scalar function
	$$G(x,t):=\rho(x)\bigl(u_{11}+|\nabla u|^2\bigr)(x,t)$$
	on $M\times[0,T]$. Consequently, at $(x_0,t_0)$ we obtain
	\begin{eqnarray}
		\label{y1}
		0&\le& G_t=\rho\Bigl(u_{11t}+2\sum_{l}u_lu_{lt}\Bigr),\\
		\label{y2}
		0&=&G_j=\frac{ \rho_j }{\rho} G + \rho\Bigl(u_{11j}+2\sum_{l\ge 1}u_l u_{lj}\Bigr),
		\quad \hbox{ for any } j,\\
		0&\ge&(G_{ij})=\ds \left(\frac{ \rho\rho_{ij}-2\rho_i\rho_j}{\rho^2} G
		+ \rho\Bigl(u_{11ij}+\sum_{l\ge 1}\bigl(2u_{li}u_{lj}+2u_lu_{lij}\bigr)\Bigr)\right).
	\end{eqnarray}
	Recall that $(F^{ij})$ is positive definite. Hence,
	\beq\label{x-2}
	\ba{rcl}
	0&\ge&\ds\vs \sum_{i,j\ge 1}F^{ij}G_{ij}-G_t\\
	&\ge&\ds\vs\sum_{i,j\ge 1}F^{ij}\frac{ \rho \rho_{ij}-2\rho_i\rho_j}{\rho^2} G
	+\rho \sum_{i,j\ge 1} F^{ij}\Bigl(u_{11ij}+\sum_{l\ge 1}(2u_{li}u_{lj}+2u_lu_{lij})\Bigr)\\
	&&\ds-\rho\Bigl(u_{11t}+2\sum_{l\ge 1}u_lu_{lt}\Bigr). \ea \eeq
	First, by the properties of $\rho$,
	\beq \label{x-1}
	\sum_{i,j\ge 1}F^{ij} \frac{ \rho \rho_{ij}-2\rho_i\rho_j}{\rho^2} G
	\ge -C\sum_{i,j\ge 1}|F^{ij}|\,\frac{G}{\rho}.
	\eeq
	Moreover,
	\beq \label{x-3} \ba{rcl}
	\ds\sum_{i,j\ge 1} |F^{ij}|  & \ge &\ds\vs\sum_{i} F^{ii}  \\
	&=& \ds \vs \left( n-1-\frac {n\s_2(W)}{\s_1^2(W)}\right)+ \frac{n\nu}{\s_1^2(W)}
	\ge \frac{1}{n} \sum_{i,j\ge 1} |F^{ij}|, \ea \eeq
	since $\{F^{ij}\}$ is positive definite. In particular,
	\beq\label{x-4}
	\sum_i F^{ii} \ge \frac {n-1}{2}+\frac{n\nu}{\s_1^2(W)}\ge 1+\frac{3\nu}{\s_1^2(W)}.
	\eeq
	Using the commutation identities
	\begin{equation}\label{0x1}
		u_{kij}=u_{ijk}+\sum_m R_{mikj}u_m,
	\end{equation}
	\begin{equation}\label{0x2}
		u_{kkij}=u_{ijkk}+\sum_m \bigl(2R_{mikj}u_{mk}-Ric_{mj}u_{mi}-Ric_{mi}u_{mj}-Ric_{mi,j}u_m+R_{mikj,k}u_m\bigr),
	\end{equation}
	and
	\begin{equation}\label{0x3}
		\bigl(\sum_l u_l^2\bigr)_{11}=2\sum_l\bigl(u_{11l}u_l+u_{1l}^2\bigr)+O(|\n u|^2),
	\end{equation}
	we obtain
	\beq\label{x1} \ba{rcl}
	\ds \sum_{i,j\ge 1} F^{ij} u_{11ij}
	&\ge & \ds\vs \sum_{i,j\ge 1} F^{ij} \left(w_{ij,11}-(u_{11})_iu_j -u_i (u_{11})_j\right)\\
	&&\ds\vs+\sum_{i,j,l\ge 1}F^{ij}(u_{1l}^2 + u_{11l}u_l)(g_0)_{ij}\\
	& & \ds\vs -2\sum_{i,j\ge 1} F^{ij}u_{i1}u_{j1}-
	C(1+|\n^2 u|+|\n u|^2)\sum_{i,j\ge 1} |F^{ij}|
	\ea\eeq and \beq\label{x2}\ba{rcl} \ds\vs \sum_{i,j, l} F^{ij}
	u_{l}u_{lij}
	&\ge & \ds \sum_{i,j, l} F^{ij} u_{l}w_{ij,l}-\sum_{i,j, l} F^{ij}(u_lu_{il}u_j+u_lu_iu_{jl}) \\
	&&\ds +\frac1 2\sum_{i,j} F^{ij}\langle \n u, \n(|\n u|^2)\rangle (g_0)_{ij} -C(1+|\n u|^2)\sum_{i,j\ge 1} |F^{ij}|.
	\ea\eeq Combining (\ref{x1}) and (\ref{x2}), we deduce
	\beq\label{x3}\ba{rcl} & &\ds\vs \sum_{i,j\ge 1}
	F^{ij}(u_{11ij}+2\sum_{l\ge 1}(u_{li}u_{lj}
	+ u_l u_{lij}))\\
	&\ge &\ds\vs \sum_{i,j\ge 1} F^{ij} (w_{ij,11}+2\sum_{l\ge 1} w_{ij,l}u_l) +2\sum_{i,j\ge 1} F^{ij} \sum_{l\ge 2}u_{li} u_{lj}\\
	& &\ds\vs+\sum_{i,j,l\ge 1} u_{1l}^2 F^{ij}(g_0)_{ij} -\sum_{i,j} F^{ij}\left[(u_{11}+ |\n u|^2)_iu_j+ u_i(u_{11}+ |\n u|^2)_j\right.\\
	&&\ds\vs \left.-\langle \n u, \n(u_{11}+ |\n u|^2)\rangle (g_0)_{ij}\right]
	-C(1+|\n^2 u|+|\n u|^2)\sum_{i,j\ge 1} |F^{ij}|\\
	&\ge&\ds \vs \sum_{i,j} F^{ij} (w_{ij,11}+2\sum_l w_{ij,l}u_l)+u_{11}^2\sum_{i,j} F^{ij}(g_0)_{ij}\\
	&&\ds \vs + \sum_{i,j} F^{ij}\left( \rho_iu_j+\rho_ju_i
	-\langle\n\rho,\n u\rangle (g_0)_{ij} \right)
	\frac{G}{\rho^2}\\
	&&\ds\vs-C(1+|\n^2 u|+|\n u|^2)\sum_{i,j\ge 1} |F^{ij}|.
	\ea\eeq
	In the last inequality we have used (\ref{y2}).
	Now, we want to estimate $ \sum_{i,j,l} F^{ij} w_{ij,l}u_l$
	and $\sum_{i,j} F^{ij} w_{ij,11}$. By differentiating $F$ we get
	\beq\label{eq4.11}\ba{llll}
	\ds \sum_l F_lu_l= \sum_{i,j,l}F^{ij} w_{ij,l}u_l+\sum_{l} \frac{\p F}{\p u
	}u_l^2&=\ds\sum_{i,j,l} F^{ij} w_{ij,l}u_l+\sum_{l} \frac{(4-2\e)r_\e(g)e^{-(4-2\e)u}
		u_l^2}{\s_1(W) }. \ea\eeq
	By differentiating $F$ twice  and
	using the concavity  (\ref{concave}) of $F$ in $W$, we have
	\beq\label{eq4.12x}\ba{rcl} \ds \sum_{i,j} F^{ij} w_{ij,11}&=& \ds
	F_{11}-\sum_{i,j,k,m}\frac{\p^2 F}{\p w_{ij}\p w_{km}}
	w_{ij,1}w_{km,1}\\
	&&-\ds 2\sum_{i,j}\frac{\p^2 F}{\p w_{ij}\p u}
	w_{ij,1}u_1-\frac{\p^2 F}{\p^2 u}
	u_{1}^2-\frac{\p F}{\p u}
	u_{11}\\
	&\ge&\ds F_{11}+\frac{2\nu (\sum_i
		w_{ii,1})^2}{(\s_1(W))^3}+\frac{2(4-2\e)r_\e(g)e^{-(4-2\e)u} (\sum_i
		w_{ii,1})u_1}{(\s_1(W))^2} \\
	&&\ds+
	\frac{(4-2\e)^2r_\e(g)e^{-(4-2\e)u} u_1^2}{\s_1(W)}- \frac{(4-2\e)r_\e(g)e^{-(4-2\e)u} u_{11}}{\s_1(W)}\\
	&\ge&\ds F_{11}+ \frac{(4-2\e)^2r_\e(g)e^{-(4-2\e)u} u_1^2}{2\s_1(W)}-
	\frac{(4-2\e)r_\e(g)e^{-(4-2\e)u} u_{11}}{\s_1(W)}. \ea \eeq These
	estimates give 
    \beq\label{eq4.12}\ba{rcl} 
    &&\ds 
    \sum_{i,j\ge 1}
	F^{ij}
	(w_{ij,11}+2\sum_{l\ge 1} w_{ij,l}u_l)\\
    &\ge& F_{11} +2 \sum_l F_lu_l -\frac{(4-2\e)r_\e(g)e^{-(4-2\e)u} u_{11}}{\s_1(W)}-\sum_{l=1}^n
	\frac{2(4-2\e) \nu u_l^2}{\s_1(W) }. \ea\eeq 
    Recall our flow
	%(\ref{eq2.1}) that 
    \beq\label{eq4.13} F=u_t-s_\e(g). \eeq
	Hence we have
	\beq\label{eq4.14} F_{11}=u_{11t}, \eeq \beq\label{eq4.15}
	F_{l}=u_{lt},\quad\;\forall\, l=1,\cdots,n. \eeq
	Gathering %(\ref{eq4.5}), (\ref{eq4.10}),
	\eqref{x-2}, \eqref{x-1}, \eqref{x-3}, \eqref{x3}, \eqref{eq4.12}, \eqref{eq4.14} and \eqref{eq4.15}, we obtain \beq \label{eq4.16}
	\ba{rcl} 0 &\geq& \ds\vs -C\left(\sum_{i,j}
	\left|F^{ij}\right|\right) \frac{G}{\rho}+\rho\left(\sum_i F^{ii}
	\right)u_{11}^2  \ds\vs -C
	\rho\left(\sum_{i,j} \left|F^{ij}\right|\right) (1+|\n u|^2+|\n^2 u|)\\
	&& \ds\vs + \sum_{i,j} F^{ij}\left( \rho_iu_j+\rho_ju_i
	-\langle\n\rho,\n u\rangle (g_0)_{ij} \right)
	\frac{G}{\rho} \ds -\rho \frac{(4-2\e)r_\e(g)e^{-(4-2\e)u} }{\s_1(W)}(u_{11}+ 2\sum_{l=1}^n
	u_l^2). \ea \eeq
	Since $W\in\Gamma_1^+$, we have
	$$u_{11}(x_0,t_0)\ge \frac{1}{20}|\nabla u|^2(x_0,t_0),$$
	and hence
	$$G(x_0,t_0)\le 21\rho(x_0)u_{11}(x_0,t_0)$$
	(see (44) in \cite{GLWJDG}). Multiplying \eqref{eq4.16} by $\rho$ yields
	\beq\label{estimate1}
	0\ge \sum_{i} F^{ii}\Bigl(-C G +(\tfrac{G}{21})^2-CG^{3/2}\Bigr)
	-8\rho e^{-2u} r_\e(g) e^{-(2-2\e)u}\frac{G}{\s_1(W)}.
	\eeq
	If $\frac{G}{\s_1(W)}\ge 2352=16\times (21)^2/3$, then by \eqref{x-4},
	\[
	\frac12 \sum_{i} F^{ii}\Bigl(\frac{G}{21}\Bigr)^2
	-8 r_\e(g)\rho e^{-(4-2\e)u}\frac{G}{\s_1(W)}
	\ge \nu\left(\frac{G^2}{294\s_1^2(W)}-8\rho\frac{G}{\s_1(W)}\right)\ge 0.
	\]
	Together with \eqref{estimate1}, this implies
	\[
	0\ge \sum_{i} F^{ii}\Bigl(-C G +\frac12\bigl(\frac{G}{21}\bigr)^2-CG^{3/2}\Bigr),
	\]
	and hence $G(x_0,t_0)\le C$. This gives the desired estimate.
	If instead $\frac{G}{\s_1(W)}<2352$, the desired bound follows from \eqref{estimate1} and Lemma \ref{lem2.3}.
\end{proof}

\begin{cor}\label{thmglobal1}
Under the assumptions of Theorem \ref{thmlocal1}, there exists a constant $C$, depending only on $g_0$ (and independent of $T$), such that for all $t\in[0,T]$,
\begin{equation}
\|u(t,\cdot)\|_{C^2(M)}\le C.
\end{equation}
\end{cor}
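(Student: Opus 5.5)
The plan is to turn the local estimate \eqref{eq_thm1.1} of Theorem \ref{thmlocal1} into a global $C^2$ bound. This needs two ingredients: a uniform upper bound for $r_\e(g)$, and a uniform lower bound for $\inf u$. Once these are in hand, the right-hand side of \eqref{eq_thm1.1} is a constant. A uniform upper bound for $u$ then completes the $C^0$ part, and covering $M$ by finitely many balls $B_{R/2}$ gives the global estimate.

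The first step, bounding $r_\e$, is immediate from Lemma \ref{lem2.2}: $r_\e(g(t))$ is non-increasing, so $0<r_\e(g(t))\le r_\e(g_1)$. Positivity of $r_\e$ along the flow is part of the standing assumption that makes the flow parabolic.

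The second step, the two-sided $C^0$ bound, is where I expect the main obstacle. The natural tool is a Harnack-type argument. Set $m(t)=\inf u(\cdot,t)$ and $M(t)=\sup u(\cdot,t)$. At a point where $u$ attains $m(t)$, apply \eqref{eq_thm1.1} on a ball of fixed radius around that point, with the infimum taken over the ball. This gives $|\n u|^2\le C(1+r_\e(g_1)e^{-(2-\e)m})$.

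Suppose $m$ is very negative. Then the gradient bound is of order $e^{-(1-\e/2)m}$, so $u\le m+1$ on a ball of radius roughly $e^{(1-\e/2)m}$. On that ball $e^{2\e u}e^{-nu}$ is large, because $e^{-2u}g_0$ has volume density $e^{-nu}$. Its contribution to ${\cal F}_{0,\e}$ is about $e^{-(n-2\e)m}\cdot e^{n(1-\e/2)m}=e^{-(n\e/2-2\e)m}$ up to constants. Since $n\ge5$, this tends to infinity as $m\to-\infty$ when $\e>0$. This contradicts the preservation of ${\cal F}_{0,\e}$ from Lemma \ref{lem2.2}. For $\e=0$ the exponent vanishes, and I would instead use rescaling together with the Yamabe-type constant as a blow-up obstruction: rescale around the minimum point, use the $C^2$ bound in rescaled variables, and obtain a limit metric on $\R^n$ with controlled volume. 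If needed, I would rely only on the $\e>0$ case, which is what Section 5 uses.

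For the upper bound on $u$, note that $\sup u$ large makes the volume density tiny. Control comes from ${\cal F}_{0,\e}$ fixed combined with the gradient bound now available, since $\inf u$ is bounded below. Because $|\n u|\le C$ globally, $\sup u-\inf u\le C\,\mathrm{diam}(M)$. Hence if $\sup u\to\infty$ then $u$ is large everywhere and ${\cal F}_{0,\e}\to0$, again a contradiction. So $|u|\le C$.

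With $u$ bounded in $C^0$, \eqref{eq_thm1.1} gives $|\n u|^2+|\n^2u|\le C$ uniformly in $t$, with $C$ depending on $g_0$ and on the initial data through $r_\e(g_1)$ and ${\cal F}_{0,\e}(g_1)$. Combined with the $C^0$ bound, this yields the claim. The delicate point remains the lower bound on $\inf u$, specifically ruling out concentration using the conserved quantity and the precise exponent $2-\e$ in Theorem \ref{thmlocal1}.
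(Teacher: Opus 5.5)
Your proposal is essentially the paper's own proof, in the same order:
\begin{itemize}
\item bound $r_\e$ from above by the monotonicity in Lemma \ref{lem2.2};
\item get a lower bound on $u$ by showing that a very negative minimum $m$ forces $\int_M e^{2\e u}\,dvol(g)\ge c\,e^{-(n-4)\e m/2}\to\infty$ on a ball of radius about $e^{(1-\e/2)m}$, contradicting the preserved ${\cal F}_{0,\e}$;
\item get the upper bound from the global gradient bound together with the normalization.
\end{itemize}

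One step needs repair. The infimum on the right of \eqref{eq_thm1.1} is taken over the space-time cylinder $B_R\times[0,T]$, not over the ball at a fixed time. So with your $m(t)=\inf u(\cdot,t)$, the bound $|\nabla u|^2\le C(1+r_\e e^{-(2-\e)m(t)})$ at time $t$ does not follow from Theorem \ref{thmlocal1}, because $u$ may have been much lower at an earlier time. The paper avoids this by using the running minimum $m(t)=\min_{[0,t]\times M}u$. It then applies Theorem \ref{thmlocal1} on $[0,s]$, where $s$ is the time at which this minimum is attained, so the space-time infimum is exactly $u(s,x_s)$. With that change your concentration argument goes through verbatim.

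Also, the paper's concentration argument, like yours, gives a contradiction only for $\e>0$, since the exponent $-(n-4)\e m/2$ vanishes at $\e=0$. The $\e=0$ rescaling discussion is therefore unnecessary for what is used later.
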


\begin{proof}
By Lemma~\ref{lem2.2}, after renormalizing along the flow we may assume
\begin{equation}\label{volumebound}
\ds\int_M e^{2\e u}\,dvol(g)\equiv 1.
\end{equation}
Moreover, Lemma \ref{lem2.2} implies $0<r_\e(g(t))\le r_\e(g_0)$ for all $t\in[0,T]$.

\noindent{\bf Claim.} There exists $C>0$, independent of $T\in[0,T^*)$, such that for all $t\in[0,T]$ and $x\in M$,
\begin{equation}\label{eq6.7}
 u(t,x)\ge -C.
\end{equation}
Set $m(t):=\min_{(s,x)\in [0,t]\times M}u(s,x)$ and choose $s_t\in [0,T],\,x_t\in M$ so that $u(s_t,x_t)=m(t)$. Suppose, for contradiction, that there exists a sequence $t_n\to T$ with $m(t_n)\to-\infty$. Also there exists $s_n\in [0,t_n]$ such that $m(t_n)= u(s_n,x_{s_n})=m(s_n)$. 
Applying Theorem \ref{thmlocal1} at time $s_n$ yields, for all $x\in M$,
\[
|\nabla u(s_n,x)|^2\le C e^{-(2-\e)m(s_n)}.
\]
Therefore, for all $x\in B\bigl(x_{s_n},\,e^{(1-\e/2)m(s_n)}\bigr)$,
\[
|u(s_n,x)-m(s_n)|\le C.
\]
Consequently,
\[
\int_M e^{2\e u(s_n,\cdot)}\,dvol(g_{s_n})
\ge C\int_{B\left(x_{s_n},\,e^{(1-\e/2)m(s_n)}\right)} e^{(-n+2\e)m(s_n)}\,dvol(g_0)
\ge C e^{-(n-4)\e m(s_n)/2}\to\infty,
\]
contradicting \eqref{volumebound}. This proves the claim.

Combining \eqref{eq6.7} with Theorem \ref{thmlocal1}, we obtain a constant $C>0$, independent of $T\in[0,T^*)$, such that for all $(t,x)\in[0,T]\times M$,
\begin{equation}\label{eq6.7.2}
|\nabla u(t,x)|+|\nabla^2 u(t,x)|\le C.
\end{equation}
Finally, using \eqref{volumebound} again yields a uniform bound for $u$ itself, and hence
\[
|u(t,x)|+|\nabla u(t,x)|+|\nabla^2 u(t,x)|\le C,
\qquad \forall (t,x)\in[0,T]\times M.
\]
This completes the proof.
\end{proof}

	\subsection{Uniform parabolicity}\label{Uniform parabolicity for non-conformal flow}
	
	We prove in this subsection that our flow (\ref{eq2.1}) preserves the positivity of the
	scalar curvature. 	
	\begin{prop}
		\label{positivity} Let $\e>0$ and assume that $Y_{\e}(M,[g_0])>0$. Let $u$ be a solution of \eqref{eq2.1}. Then there is a constant $C_0>0$, independent of $T\in[0, T^*)$, such that $\s_1(g(t))>C_0$ for any $t\in[0, T]$.
	\end{prop}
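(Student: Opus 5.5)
We argue by contradiction, using a blow-up analysis combined with the monotonicity of Lemma~\ref{lem2.2}. First we record what is already uniform. By Corollary~\ref{thmglobal1} we have $\|u(t,\cdot)\|_{C^2(M)}\le C$ on $[0,T]$, with the normalization \eqref{volumebound}. By Lemma~\ref{lem2.2}, $r_\e(g(t))$ is non-increasing. Since $g(t)\in{\cal C}_1([g_0])$ as long as the flow exists, we also have
\[
r_\e(g(t))\ge Y_\e(M,[g_0])\,({\cal F}_{0,\e})^{\frac{n-4}{n-2\e}-1}\ge c_0>0 .
\]
Hence $\nu=r_\e e^{-(4-2\e)u}$ is pinched between two positive constants uniformly in $t$. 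Consequently $W=W(t)$ stays in a compact set of symmetric matrices with $\s_1(W)\ge 0$ on the closure, and the only possible degeneration is $\s_1(W)\to 0$ at some points.

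The key step is a lower bound for $\s_1$ via the maximum principle applied to the quantity $F=\frac{\s_2(W)-\nu}{\s_1(W)}=u_t-s_\e$. First, $s_\e(t)$ is bounded: from \eqref{eq2.3}, $|s_\e|$ is controlled by $\int |F|$. Bounding $F$ from below is exactly what yields the lower bound on $\s_1$. Indeed, $\s_2\le \frac{n-1}{2n}\s_1^2$, so
\[
F\le \frac{n-1}{2n}\s_1-\frac{\nu}{\s_1}.
\]
Thus if $\s_1\to 0$ at some point while $\nu\ge c>0$, then $F\to-\infty$ there. It therefore suffices to show $F\ge -C$ uniformly.

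To bound $F$ from below, differentiate the flow in time. The quantity $\phi:=u_t=F+s_\e$ satisfies a linear parabolic equation obtained by differentiating \eqref{eq4.13}:
\[
\phi_t=F^{ij}\bigl(\n_{ij}\phi+\text{first-order terms in }\n\phi\bigr)+\frac{\p F}{\p u}\phi+\frac{\p F}{\p r_\e}\,r_\e'+s_\e' .
\]
Here $\frac{\p F}{\p u}=\frac{(4-2\e)\nu}{\s_1}>0$ and $\frac{\p F}{\p r_\e}=-\frac{e^{-(4-2\e)u}}{\s_1}<0$, while $r_\e'\le 0$. At a spatial minimum of $\phi$ where $\phi$ is very negative, the zeroth-order term $\frac{(4-2\e)\nu}{\s_1}\phi$ has the wrong sign. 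We therefore consider instead $\psi:=e^{\beta u}(F-\text{const})$, or equivalently track $F\s_1=\s_2-\nu$ together with the term $r_\e'$, and choose the weight so that the bad term $\frac{\nu}{\s_1}\phi$ is absorbed. The favourable term comes from $-\frac{\p F}{\p r_\e}r_\e'\ge 0$, together with the concavity gain \eqref{concave}, which supplies $-\nu(\sum\n_i w_{ii})^2/\s_1^3$.

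The main obstacle is this zeroth-order term. The positivity $Y_\e>0$ enters only through the lower bound on $\nu$, which forces the $\nu/\s_1$ terms to dominate near degeneration. If the direct maximum principle fails, we fall back on a contradiction argument. Assume there are $t_k$ and $x_k$ with $\s_1(g(t_k))(x_k)\to0$. Using the $C^2$ bounds and parabolic Krylov--Safonov/Evans--Krylov estimates (valid where uniform ellipticity holds), we pass to a limit of $u(t_k+\cdot)$. The energy identity \eqref{eq2.4}, integrated in time, gives $\int_0^\infty\!\int_M e^{2u}\s_1 F^2<\infty$. We then show that the limit still satisfies $F\ge -C$ near $x_k$. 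This contradicts $F\to-\infty$ as $\s_1\to0$, since $\nu\ge c_0$ rules out $\s_2-\nu$ and $\s_1$ vanishing compatibly.
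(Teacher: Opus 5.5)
Your reduction is correct and matches how the paper concludes. Since $\s_2\le\frac{n-1}{2n}\s_1^2$ and $\nu=r_\e e^{-(4-2\e)u}$ is pinched between positive constants, a lower bound for $F=\frac{\s_2(W)-\nu}{\s_1(W)}$ gives $\s_1(W)\ge c$. You also correctly locate the obstruction: the zeroth-order term $\frac{(4-2\e)\nu}{\s_1}u_t\approx-\frac{(4-2\e)\nu^2}{\s_1^2}$ at a very negative minimum.

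However, the step that overcomes this obstruction is missing, and the sources of gain you name cannot supply it.
\begin{itemize}
\item The concavity \eqref{concave} does not enter at all. The evolution of $F=u_t-s_\e$ comes from differentiating the equation once, so no second derivatives of $F$ in $W$ appear.
\item The $r_\e'$ term does have a good sign. (The term in your equation is $\frac{\p F}{\p r_\e}r_\e'\ge0$; the quantity $-\frac{\p F}{\p r_\e}r_\e'$ you wrote is $\le0$.) But its size is a spatial integral, so it cannot absorb a pointwise $\nu^2/\s_1^2$ at a localized degeneration.
\item $s_\e$ is not bounded a priori, since $\int_M|F|$ contains $\int_M\nu/\s_1$. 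You need a minimum-point comparison like \eqref{eq_add6} to get $|s_\e|\le C\bigl(1+\kappa+1/\s_1(x_0,t_0)\bigr)$.
\end{itemize}
The fallback is circular. ``We then show that the limit still satisfies $F\ge -C$'' is the statement to be proved. Krylov--Safonov and Evans--Krylov are unavailable exactly where $\s_1\to0$. Finiteness of $\int\!\int e^{2u}\s_1F^2$ does not exclude pointwise degeneration.

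The missing idea is the actual source of the good term: the part $\frac{\nu}{\s_1^2}\delta^{ij}$ of $F^{ij}$, contracted with the background Schouten tensor, using $\s_1(g_0)>0$ (one may take $g_0\in\mathcal C_1$). The paper applies the minimum principle to $G=F-\kappa e^{-2u}$. Since $\p_tW=\n_g^2u_t$ and $\n_g^2e^{-2u}=e^{-2u}\bigl(-2W+2du\otimes du+|\n u|^2g_0+2S(g_0)\bigr)$, at the minimum one gains $\frac{2\kappa\nu\s_1(g_0)e^{-2u}}{\s_1^2}$. The $du\otimes du$, $|\n u|^2g_0$ and $\frac{(\s_1^2-\s_2)\s_1(g_0)}{\s_1^2}$ contributions are nonnegative. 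Every remaining term is either of lower order in $1/\s_1$ or a $\kappa$-independent multiple of $\s_1^{-2}$, once $s_\e$ and $r_\e'$ are controlled through \eqref{eq_add6}. Taking $\kappa$ large gives $0\ge\frac{\kappa}{\s_1^2}-c_4\bigl(\frac{\kappa+1}{\s_1}+1\bigr)$ at $(x_0,t_0)$, and \eqref{eq_add6} then globalizes the bound.

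Your weight $\psi=e^{\beta u}F$ can be made to work by the same mechanism, but only for $-\frac{n-2}{2}\le\beta<0$. In that range, at the minimum, $\psi\,F^{ij}(\n_g^2e^{-\beta u})_{ij}\ge c\,\s_1^{-3}-C\s_1^{-2}$, driven by $|\beta|\,|F|\,\nu\s_1(g_0)/\s_1^2$. For $\beta>0$ the same term has the wrong sign at order $\s_1^{-3}$. Leaving both the sign of $\beta$ and the origin of the gain unspecified is precisely the gap.
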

	
\begin{proof}
The proof is adapted from \cite{GWCrelle}. Throughout the argument, $C$ denotes a positive constant independent of $T$ that may change from line to line.
Recall
\[
\ba{l}
\vs\ds W=(w_{ij})=\Bigl(\nabla^2_{ij}u+u_i u_j-\frac{|\n u|^2}{2}(g_0)_{ij}+(S_{g_0})_{ij}\Bigr),\\
\ds \nu=r_\e(g)e^{-(4-2\e)u}.
\ea
\]
For $\kappa>0$ (to be chosen large), set
\[
G:=\frac{\s_2(W)-\nu}{\s_1(W)}-\kappa e^{-2u}.
\]
In view of \eqref{eq4.13}, this can be written as
$$G=u_t-\kappa e^{-2u}-s_\e(g(t)).$$
It follows from Lemma \ref{lem2.2}  and the assumption {$Y_{\varepsilon}(M,[g_0])>0$} that there exist positive constants $c_*$ and $c^*$ such that
\begin{equation}\label{upperandlowe bound of r}
 c_*\le r_\e(g)\le c^*.
\end{equation}

Assume that the minimum of $G$ on $M\times[0,T]$ is achieved at $(x_0,t_0)\in M\times(0,T]$. Choose normal coordinates at $x_0$ so that $W$ is diagonal at $(x_0,t_0)$.
Near $(x_0,t_0)$ we compute
\beq\label{v1}
\ba{rcl}
\ds\vs \frac{d}{dt}G
&=&\ds\sum_{i,j}F^{ij}(\nabla_g^2(u_t))_{ij}+2\kappa e^{-2u}u_t+\frac{(4-2\e)r_\e(g)e^{-(4-2\e)u}u_t}{\s_1(W)}-\frac{\alpha(t)}{\s_1(W)}\\
&=&\ds\vs\sum_{i,j}F^{ij}\Bigl((\nabla_g^2G)_{ij}+\kappa (\nabla_g^2(e^{-2u}))_{ij}\Bigr)
+2\kappa e^{-2u}u_t+\frac{(4-2\e)r_\e(g)e^{-(4-2\e)u}u_t}{\s_1(W)}-\frac{\alpha(t)}{\s_1(W)},
\ea
\eeq
where
\[
\alpha(t):=\frac{dr_\e(g)}{dt}e^{-(4-2\e)u}.
\]
As in the proof of Theorem \ref{thmlocal1},
\[
F^{ij}:=\frac{\p F}{\p w_{ij}}=\frac{(\s_1^2(W)-\s_2(W)+\nu)\delta^{ij}-\s_1(W)W^{ij}}{\s_1^2(W)}
\]
is positive definite.

\smallskip
\noindent{\bf Claim.} There exists $c_2>0$, independent of $T$ and $\kappa$, such that for all $t\in[0,T]$,
\begin{equation}\label{eq_add3}
\left|\frac{dr_\e(g)}{dt}(t)\right|\le c_2\left(1+\kappa+\frac{1}{\s_1(W)(x_0,t_0)}\right).
\end{equation}
Using \eqref{eq2.1}, \eqref{eq2.4}, and Corollary \ref{thmglobal1}, one has
\[
|s_\e(g)|\le c\left(1+\int_M\frac{1}{\s_1(W)(x,t)}\,dvol(g_0)\right),
\]
and therefore
\beq\label{eq_add}
\left|\frac{dr_\e(g)}{dt}(t)\right|\le c\left(1+\int_M\frac{1}{\s_1(W)(x,t)}\,dvol(g_0)\right).
\eeq
Since $(x_0,t_0)$ is the minimum point of $G$ on $M\times[0,T]$, for any $(x,t)\in M\times[0,T]$ we have
\beq\label{add_x}
\frac{\nu}{\s_1(W)}(x,t)
\le \frac{\s_2(W)}{\s_1(W)}(x,t)-\kappa e^{-2u(x,t)}-\frac{\s_2(W)-\nu}{\s_1(W)}(x_0,t_0)+\kappa e^{-2u(x_0,t_0)}.
\eeq
By Corollary \ref{thmglobal1}, the quantities $\s_1(W)$, $\s_2(W)$ and $e^{-2u}$ are bounded, and $\nu$ is bounded above and below by positive constants. Together with
$$\s_2(W)(x,t)\le \frac{n-1}{2n}\s_1^2(W)(x,t),$$
\eqref{add_x} implies that there exists $c_3>0$, independent of $T$ and $\kappa$, such that for all $(x,t)\in M\times[0,T]$,
\begin{equation}\label{eq_add6}
\frac{1}{\s_1(W)(x,t)}\le c_3\left(1+\kappa+\frac{1}{\s_1(W)(x_0,t_0)}\right).
\end{equation}
Combining \eqref{eq_add6} with \eqref{eq_add} yields \eqref{eq_add3}.

As a consequence, at $(x_0,t_0)$ we have
\begin{equation}\label{eq_add5}
\frac{|\alpha(t_0)|}{\s_1(W)}\le c\left(\frac{1+\kappa}{\s_1(W)}+\frac{1}{\s_1^2(W)}\right).
\end{equation}
	Since $(x_0,t_0)$ is a minimum point of $G$ on $M\times[0,T]$, at $(x_0,t_0)$ we have
	$$\ds\frac{dG}{dt}\le 0,\qquad G_l=0\ \forall l,$$
	and $(G_{ij})$ is non-negative definite (with respect to $g_0$). Note that
	\[
	(\n^2_g)_{ij}G
	=G_{ij}+u_iG_j+u_jG_i-\sum_l u_l G_l\,\d_{ij}
	=G_{ij}
	\quad\text{at }(x_0,t_0),
	\]
	where $G_j$ and $G_{ij}$ denote the first and second derivatives with respect to the background metric $g_0$.
	From the positivity of $\{F^{ij}\}$ and \eqref{v1}, we obtain
	\begin{equation}
		\label{5add1}\begin{array}{rcl}
			0& \ge &  \vs\ds G_t- \sum_{i,j} F^{ij} G_{ij} \\
			&&\\
			&\ge&\vs\ds \kappa\sum_{i,j}  F^{ij}
			\{(e^{-2u})_{ij}+u_i(e^{-2u})_{j}+u_j(e^{-2u})_{i}-\sum_l u_l(e^{-2u})_l\d_{ij}\}\\
			&&\vs\ds + 2\kappa e^{-2u}u_t+\frac{(4-2\e) r_\e(g) e^{-(4-2\e)u}u_t}{\s_1(W)}-\frac \alpha{\s_1(W)}\\
			&=&\vs\ds \kappa e^{-2u}\sum_{i,j}  F^{ij}\{- 2w_{ij}+2u_iu_j+2S(g_0)_{ij}+|\n u|^2\d_{ij}\}\\
			&&\vs\ds + 2\kappa e^{-2u}u_t+\frac{(4-2\e) r_\e(g) e^{-(4-2\e)u}u_t}{\s_1(W)}-\frac \alpha{\s_1(W)}\\
			&= & \ds \vs\kappa
			e^{-2u}\left(\frac{-2\s_2(W)-2\nu}{\s_1(W)}\right)  + 2\kappa e^{-2u}u_t+\frac{(4-2\e) r_\e e^{-(4-2\e)u}u_t}{\s_1(W)}\\
			&&\vs\ds+\kappa e^{-2u} \sum_{i,j}
			F^{ij}(2u_iu_j+2S(g_0)_{ij}+|\n u|^2\d_{ij})-\frac \alpha{\s_1(W)}.
	\end{array}\end{equation}
	Here we used
	$$\ds\sum_{i,j}F^{ij}w_{ij}=\frac{\s_2(W)+\nu}{\s_1(W)}.$$ 
	A direct computation yields
	\begin{equation}\label{5add2}
	\begin{array}{lll}
	\ds\sum_{i,j} F^{ij}S(g_0)_{ij}
	&=&\ds\vs \frac{(\s_1^2(W)-\s_2(W))\s_1(g_0)}{\s_1^2(W)}
	-\frac1{\s_1(W)}\sum_{i,j}W^{ij}S(g_0)_{ij}+\frac{\nu\s_1(g_0)}{\s_1^2(W)}.
	\end{array}
	\end{equation}
	Combining \eqref{eq_add6}, \eqref{eq_add5}, \eqref{5add1} and \eqref{5add2}, we obtain
	\begin{equation}\label{5add3}
	\begin{array}{rcl}
	0&\ge&\vs\ds G_t-\sum_{i,j}F^{ij}G_{ij}\\
	&\ge&\vs\ds \kappa e^{-2u}\left[\frac{-2\s_2(W)-2\nu}{\s_1(W)}+2\left(\frac{\s_2(W)-\nu}{\s_1(W)}+s_\e(g)\right)
	\right.\\
	&&\left.\ds\vs+\frac{2(\s_1^2(W)-\s_2(W))\s_1(g_0)}{\s_1^2(W)}-\frac2{\s_1(W)}\sum_{i,j}W^{ij}S(g_0)_{ij}+\frac{2\nu\s_1(g_0)}{\s_1^2(W)}\right]\\
	&&\ds -C\left(1+\frac{1+\kappa}{\s_1(W)}+\frac{1}{\s_1^2(W)}\right),
	\end{array}
	\end{equation}
	since $(F^{ij})$ is positive definite and $\kappa>0$. We use $O(1)$ to denote terms with a uniform bound. By Corollary \ref{thmglobal1}, $\|u\|_{C^2}$ is uniformly bounded; in particular $\s_2(W)=O(1)$ and
	$$\ds\sum_{i,j}W^{ij}S(g_0)_{ij}=O(1).$$
	Also $\s_1^2(W)-\s_2(W)\ge 0$. The only potentially harmful contribution in \eqref{5add3} is the term of order $\s_1(W)^{-2}$.

	Choose $\kappa\ge 1$ so large that
	\[
	\frac{\kappa\nu\s_1(g_0)e^{-2u}}{\s_1^2(W)}\ge \frac{C}{\s_1^2(W)}.
	\]
	Then \eqref{5add3} implies at $(x_0,t_0)$ that
	\[
	0 \ge  \frac{\kappa}{\s_1^2(W)}-c_4\left(\frac{\kappa+1}{\s_1(W)}+1\right)
	\]
	for some constant $c_4>0$ independent of $T$ and $\kappa$. Consequently, there exists $c_5>0$ (independent of $T$) such that
	\[
	\s_1(W)(x_0,t_0)\ge c_5.
	\]
	%which
	%implies for some positive constant $C_2'$ independent of $T$
	%\beq \label{5add4}
	%\frac{|\s_2(W)(x_0,t_0)|+\nu}{\s_1(W)(x_0,t_0)}+\kappa e^{-2u}<C_2'.
	%\eeq Since $(x_0,t_0)$ is the minimum of $F$ in $ M\times [0,T]$,
	%for any $(x,t)\in M\times [0,T]$ we have
	%\[
	%\frac{\s_2(W)(x,t)-\nu}{\s_1(W)(x,t)}- \kappa e^{-2u(x,t)}\ge -C_2'.
	%\]
	Hence, by \eqref{eq_add6},  there exists a constant $c_6>0$, independent of $T$, such that for all $(x,t)\in M\times[0,T]$,
	\[
	\s_1(W)(x,t)\ge c_6.
	\]
	This completes the proof of Proposition~\ref{positivity}.
\end{proof}

\subsection{Convergence of the flow}
We now establish convergence of flow \eqref{eq2.1}.

\begin{thm}\label{existence}
Let $\e>0$ and assume that $Y_{\e}(M,[g_0])>0$. Then flow \eqref{eq2.1} with initial metric $g_1\in {\cal C}_{1}([g_0])$ converges to a metric $g_\infty$ satisfying \eqref{eq2.5}. In particular, $g_\infty\in {\mathcal C}_2([g_0])$. Moreover, $Y_{\e}(M,[g_0])$ is achieved by a conformal metric $g_\e\in {\mathcal C}_2([g_0])$.
\end{thm}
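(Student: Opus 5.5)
We combine the a priori estimates of the previous two subsections with the monotonicity of Lemma~\ref{lem2.2}. First, long-time existence. Start the flow \eqref{eq2.1} at $g_1\in{\cal C}_1([g_0])$, normalized so that ${\cal F}_{0,\e}(g_1)=1$. By Corollary~\ref{thmglobal1}, $\|u(t)\|_{C^2(M)}\le C$ with $C$ independent of $T<T^*$. By Proposition~\ref{positivity}, $\s_1(g(t))\ge C_0>0$. Since $c_*\le r_\e\le c^*$ by \eqref{upperandlowe bound of r}, the operator $F(W,u)=(\s_2(W)-\nu)/\s_1(W)$ is then uniformly parabolic on the relevant compact set of $(W,u)$, and it is concave in $W$ by Lemma~\ref{lem2.3}. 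Krylov--Safonov (Evans--Krylov in the parabolic version) gives uniform $C^{2,\alpha}$ bounds in space-time, and Schauder theory gives higher bounds. Here the nonlocal terms $r_\e(t)$ and $s_\e(t)$ are Lipschitz in $t$ by \eqref{eq_add}. Hence $T^*=\infty$, and $\{u(t)\}$ is precompact in $C^{k}$ for every $k$.

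Second, we identify the limit. By \eqref{eq2.4}, integrating in $t$ and using ${\cal F}_2\ge Y_\e>0$ (with ${\cal F}_{0,\e}\equiv1$), we get
\[
\int_0^\infty\!\!\int_M e^{2u}\s_1(g)\Bigl(e^{-2u}\tfrac{\s_2(g)-r_\e e^{2\e u}}{\s_1(g)}\Bigr)^2 dvol(g)\,dt<\infty.
\]
Together with the uniform bounds on $u$, $\s_1$ and the time derivatives, this forces the integrand to tend to $0$ as $t\to\infty$, not only along a subsequence. So any subsequential limit $g_\infty=e^{-2u_\infty}g_0$ satisfies $\s_2(g_\infty)=r_\infty e^{2\e u_\infty}$ with $r_\infty=\lim r_\e(t)\ge c_*>0$; here the limit of $r_\e(t)$ exists by monotonicity. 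This is \eqref{eq2.5}. Since $\s_1(g_\infty)\ge C_0>0$ and $\s_2(g_\infty)>0$, we get $g_\infty\in{\mathcal C}_2([g_0])$.

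If full convergence rather than subsequential convergence is required, I would invoke a Łojasiewicz--Simon argument: the functional $\tilde{\cal F}_{2,\e}$ is analytic and the flow is a gradient-like flow with the estimate \eqref{eq2.4}. This is the step I expect to be the most delicate. For the existence statement, subsequential convergence suffices.

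Third, we show that $Y_\e$ is achieved. Take a minimizing sequence $g_1^{(m)}\in{\cal C}_1([g_0])$ with $\tilde{\cal F}_{2,\e}(g_1^{(m)})\to Y_\e$, and run the flow from each. Since the flow decreases $\tilde{\cal F}_{2,\e}$, the limits $g_\infty^{(m)}\in{\mathcal C}_2$ satisfy
\[
Y_\e\le\tilde{\cal F}_{2,\e}(g^{(m)}_\infty)\le\tilde{\cal F}_{2,\e}(g_1^{(m)}).
\]
Each $g_\infty^{(m)}$ solves \eqref{eq2.5}, normalized by ${\cal F}_{0,\e}=1$, with constant $r^{(m)}\in[Y_\e,c]$. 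For these stationary solutions the estimates of Theorem~\ref{thmlocal1} and the argument of Corollary~\ref{thmglobal1} apply with constants depending only on $g_0$, $\e$ and an upper bound for $r$. The lower bound on $u$ comes from the subcritical volume blow-up argument, which uses $\e>0$ essentially. A lower bound on $\s_1$ comes from $\s_1^2\ge\frac{2n}{n-1}\s_2=\frac{2n}{n-1}r e^{2\e u}\ge c>0$. Hence $\{u_\infty^{(m)}\}$ is bounded in $C^{2,\alpha}$, and a subsequence converges to $g_\e\in{\mathcal C}_2([g_0])$ with $\tilde{\cal F}_{2,\e}(g_\e)=Y_\e$. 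The main obstacle is the uniformity in the last two steps, in particular making sure the constants in Proposition~\ref{positivity} do not depend on the initial data, beyond the upper bound on $r_\e$, when passing to the minimizing sequence. I would avoid this by arguing directly on the stationary equation as above rather than on the flow.
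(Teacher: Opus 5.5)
Your proposal is correct and follows the same route as the paper. The uniform $C^2$ bounds of Corollary~\ref{thmglobal1} and the uniform parabolicity of Proposition~\ref{positivity} give long-time existence and convergence to a solution of \eqref{eq2.5}, which lies in $\mathcal C_2([g_0])$ because $r_\e\ge c_*>0$; compactness of the normalized solution set of \eqref{eq2.5} then produces a minimizer of $Y_\e$. The only difference is that the paper cites the standard arguments of \cite{GWCrelle} for convergence of the whole flow, while you prove subsequential convergence and defer full convergence to a Lojasiewicz--Simon argument, which does not affect the existence conclusions.
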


\begin{proof}
With the uniform $C^2$ estimates (Corollary~\ref{thmglobal1}) and uniform parabolicity (Proposition~\ref{positivity}), the standard arguments of \cite{GWCrelle} yield long-time existence and convergence of the flow to a stationary solution of \eqref{eq2.1}, hence a solution of \eqref{eq2.5}. The inclusion $g_{\infty}\in\mathcal C_2([g_0])$ follows from \eqref{upperandlowe bound of r}.

Furthermore, under the normalization $\int_M e^{2\e u}\,dvol(g_0)=1$ (cf.~the proof of Corollary~\ref{thmglobal1}), the set of solutions to \eqref{eq2.5} is compact. Therefore $Y_{\e}(M,[g_0])$ is achieved by some conformal metric in $\mathcal C_2([g_0])$.
\end{proof}

\section{Proof of Theorem \ref{mainthm}}
We now prove Theorem \ref{mainthm}, starting with the following lemma.

\begin{lem}\label{mainthmbis}
Let $(M^n,g_0)$ be a compact Riemannian manifold with $g_0\in \mathcal C_1$  and $n\ge 5$. Assume that $Y_{2}([g_0])>0$. Then for any $\e\in(0,1)$,
\beq\label{eq0.3bis}
Y_{\e}([g_0])=\bar Y_{\e}([g_0]).
\eeq
Moreover, $Y_{\e}([g_0])$ is achieved by a conformal metric $g\in\mathcal C_2([g_0])$.
\end{lem}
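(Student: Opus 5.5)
We prove Lemma~\ref{mainthmbis} by running the flow \eqref{eq2.1} from an almost-minimizing metric of $Y_\e$ and invoking Theorem~\ref{existence}. The inequality $Y_\e\le \bar Y_\e$ is trivial since $\mathcal C_2\subset\mathcal C_1$. The content is the reverse inequality, together with attainment.

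\textbf{Step 1: positivity of $Y_\e$.} To apply Theorem~\ref{existence} we need $Y_\e(M,[g_0])>0$. We deduce it from $Y_2>0$ by comparing the two normalizations. For $g=e^{-2u}g_0\in\mathcal C_1$, Hölder's inequality gives
\[
\int_M e^{2\e u}\,dvol(g)=\int_M e^{-(n-2\e)u}\,dvol(g_0)\le \mathrm{vol}(g_0)^{\frac{2\e}{n}}\Bigl(\int_M e^{-nu}dvol(g_0)\Bigr)^{\frac{n-2\e}{n}} .
\]
Hence
\[
({\cal F}_{0,\e})^{\frac{n-4}{n-2\e}}\le C(g_0)\,\mathrm{vol}(g)^{\frac{n-4}{n}} .
\]
When $\int\s_2(g)\,dvol(g)>0$, this yields $\tilde{\cal F}_{2,\e}(g)\ge C^{-1}Y_2>0$. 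Since $Y_2>0$ forces $\int\s_2(g)\,dvol(g)>0$ for every $g\in\mathcal C_1$, we get $Y_\e\ge c\,Y_2>0$, uniformly in $\e$.

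\textbf{Step 2: the flow decreases the functional.} Fix $\delta>0$ and choose $g_1\in\mathcal C_1([g_0])$ with $\tilde{\cal F}_{2,\e}(g_1)\le Y_\e+\delta$. Run \eqref{eq2.1} with initial metric $g_1$. By Lemma~\ref{lem2.2}, ${\cal F}_{0,\e}$ is preserved and ${\cal F}_2$ is non-increasing, so $\tilde{\cal F}_{2,\e}(g(t))$ is non-increasing. Theorem~\ref{existence}, which is applicable by Step 1, shows that the flow converges to $g_\infty\in\mathcal C_2([g_0])$. The metric $g_\infty$ solves \eqref{eq2.5}, and by smooth convergence together with monotonicity,
\[
\bar Y_\e\le \tilde{\cal F}_{2,\e}(g_\infty)\le \tilde{\cal F}_{2,\e}(g_1)\le Y_\e+\delta .
\]
Letting $\delta\to0$ gives \eqref{eq0.3bis}.

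\textbf{Step 3: attainment.} Take a minimizing sequence $g_1^{(m)}$ as in Step 2 with $\delta_m\to0$, and obtain the corresponding limits $g^{(m)}_\infty\in\mathcal C_2$, each solving $\s_2=c_m e^{2\e u_m}$ with $\tilde{\cal F}_{2,\e}(g^{(m)}_\infty)\to Y_\e$. Normalize ${\cal F}_{0,\e}=1$. The local estimate Theorem~\ref{thmlocal1}, applied to stationary solutions, combined with the lower bound argument for $u$ from Corollary~\ref{thmglobal1}, gives uniform $C^2$ bounds. These bounds rely on $c_m=r_\e$ being bounded, which holds because $c_m\to Y_\e$. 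Since $\s_2=c_m e^{2\e u_m}\ge c>0$, the equation is uniformly elliptic, so Evans--Krylov and Schauder estimates give $C^{2,\alpha}$ bounds. A subsequence then converges to a solution in $\mathcal C_2$ attaining $Y_\e$; this is exactly the compactness statement used in Theorem~\ref{existence}.

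\textbf{Main obstacle.} The step most likely to cause difficulty is verifying that the hypotheses of Theorem~\ref{existence}, in particular Proposition~\ref{positivity}, are met uniformly for arbitrary initial data in $\mathcal C_1$ near the infimum. One must check that the constants $c_*,c^*$ bounding $r_\e$ in \eqref{upperandlowe bound of r} depend only on $Y_\e>0$ and on $\tilde{\cal F}_{2,\e}(g_1)$. The lower bound follows from $r_\e=\tilde{\cal F}_{2,\e}\ge Y_\e$ under the normalization, and the upper bound from monotonicity. If Step 1's Hölder comparison runs into a sign issue, I would first try to reprove $\int\s_2(g)\,dvol(g)>0$ on $\mathcal C_1$ directly from the definition of $Y_2$.
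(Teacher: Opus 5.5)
Your proof is correct and follows the paper's route: the same H\"older comparison gives $Y_\e\ge c\,Y_2>0$, and Theorem~\ref{existence} then provides a minimizer in $\mathcal C_2([g_0])$, which forces $Y_\e=\bar Y_\e$. Your Steps~2--3 only unpack the attainment clause of Theorem~\ref{existence} (near-minimizing initial data, monotonicity, and compactness of solutions of \eqref{eq2.5}), which the paper simply cites, and you make explicit the positivity of $\int_M\s_2(g)\,dvol(g)$ on $\mathcal C_1$ that the paper's H\"older step uses without comment.
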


\begin{proof}
Let $g=e^{-2u}g_0$ be a conformal metric with positive scalar curvature. By H\"older's inequality, for all $\e\in(0,1)$,
\[
\int_M e^{2\e u}\,dvol(g)
\le \left(\int_M 1\,dvol(g)\right)^{\frac{n-2\e}{n}}\left(\int_M 1\,dvol(g_0)\right)^{\frac{2\e}{n}}.
\]
Consequently,
\[
\tilde {\cal F}_{2,\e}(g)
\ge {\cal F}_{2}(g)\left(\int_M 1\,dvol(g_0)\right)^{-\frac{2\e(n-4)}{n(n-2\e)}}.
\]
Taking the infimum over ${\cal C}_1([g_0])$ yields
\begin{equation}\label{eq_ine1}
Y_\e([g_0])\ge Y_{2,0}([g_0])\left(\int_M 1\,dvol(g_0)\right)^{-\frac{2\e(n-4)}{n(n-2\e)}}>0.
\end{equation}
Therefore, Theorem \ref{existence} provides a metric $g_\e\in\mathcal C_2([g_0])$ achieving $Y_\e([g_0])$. Since $Y_{\e}([g_0])\le \bar Y_{\e}([g_0])$, we obtain \eqref{eq0.3bis}.
\end{proof}

\begin{lem}
\[
\lim_{\e\to 0}\bar Y_\e([g_0])=\lim_{\e\to 0}Y_\e([g_0])=\bar Y_{2,0}([g_0])=Y_{2,0}([g_0]).
\]
\end{lem}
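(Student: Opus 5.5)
The plan is to prove the chain by two inequalities in the limit. Here $Y_{2,0}$ and $\bar Y_{2,0}$ denote $Y_2$ and $\bar Y_2$, i.e.\ the case $\e=0$ of $\tilde{\cal F}_{2,\e}$. By Lemma~\ref{mainthmbis} we already know $Y_\e=\bar Y_\e$ for each $\e\in(0,1)$, and trivially $Y_{2,0}\le \bar Y_{2,0}$. It therefore suffices to show
\[
\limsup_{\e\to0}\bar Y_\e\le \bar Y_{2,0}
\qquad\text{and}\qquad
\liminf_{\e\to0} Y_\e\ge Y_{2,0}.
\]
Together with $Y_\e=\bar Y_\e$ these give $\bar Y_{2,0}\ge\limsup\bar Y_\e=\limsup Y_\e\ge\liminf Y_\e\ge Y_{2,0}$. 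The remaining point is then the reverse inequality $\bar Y_{2,0}\le Y_{2,0}$.

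For the upper bound, fix $g=e^{-2u}g_0\in{\cal C}_2([g_0])$. Since $u$ is a fixed smooth function, $\int_M e^{2\e u}\,dvol(g)\to \mathrm{vol}(g)$ and the exponent $-\frac{n-4}{n-2\e}\to-\frac{n-4}{n}$ as $\e\to0$. Hence $\tilde{\cal F}_{2,\e}(g)\to\tilde{\cal F}_{2,0}(g)$, so $\limsup_\e \bar Y_\e\le \tilde{\cal F}_{2,0}(g)$. Taking the infimum over $g$ gives $\limsup\bar Y_\e\le\bar Y_{2,0}$. The same argument over ${\cal C}_1$ gives $\limsup Y_\e\le Y_{2,0}$.

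For the lower bound, use the H\"older estimate \eqref{eq_ine1} from the proof of Lemma~\ref{mainthmbis}. There the factor $\left(\int_M dvol(g_0)\right)^{-\frac{2\e(n-4)}{n(n-2\e)}}$ tends to $1$, so $\liminf Y_\e\ge Y_{2,0}$. Together with the upper bound, this shows $\lim Y_\e=Y_{2,0}$, and by Lemma~\ref{mainthmbis} also $\lim\bar Y_\e=Y_{2,0}$.

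The main obstacle is the remaining inequality $\bar Y_{2,0}\le Y_{2,0}$, because the infimum in $\bar Y_{2,0}$ runs over the smaller class. I would attack it through the minimizers. By Lemma~\ref{mainthmbis} and Theorem~\ref{existence}, $Y_\e$ is attained by $g_\e\in{\cal C}_2([g_0])$ solving \eqref{eq2.5}. Then
\[
\tilde{\cal F}_{2,0}(g_\e)\ \ge\ \bar Y_{2,0}.
\]
So it suffices to show $\tilde{\cal F}_{2,0}(g_\e)-\tilde{\cal F}_{2,\e}(g_\e)\to0$, i.e.\ that $\int e^{2\e u_\e}dvol(g_\e)$ and $\mathrm{vol}(g_\e)$ are comparable up to a factor $1+o(1)$. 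This holds if $\e\|u_\e\|_\infty\to0$, which I would try to get from the $C^0$ bounds of Corollary~\ref{thmglobal1} applied to the stationary metrics. The issue is that the constants there may depend on $\e$. If uniform bounds fail because of blow-up, I would instead reduce to the known result of \cite{STW, GWPisa}, which gives a minimizer $\bar g$ of $\bar Y_{2,0}$ in ${\cal C}_2$. If $\bar Y_{2,0}$ is below the sphere value, the $g_\e$ do not concentrate and uniform $C^2$ bounds hold. If $\bar Y_{2,0}$ equals the sphere value, a blow-up/rescaling argument shows $\liminf\tilde{\cal F}_{2,\e}(g_\e)$ is at least the sphere constant $\ge\bar Y_{2,0}$. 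In either case $\lim Y_\e\ge\bar Y_{2,0}$, hence $Y_{2,0}=\lim Y_\e\ge\bar Y_{2,0}$, closing the chain.
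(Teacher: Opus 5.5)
You correctly prove $\lim Y_\varepsilon=\lim\bar Y_\varepsilon=Y_{2,0}$: the upper bounds follow from pointwise convergence of $\tilde{\mathcal F}_{2,\varepsilon}(g)$ for fixed $g$, and the bound $\liminf_{\varepsilon\to0}Y_\varepsilon\ge Y_{2,0}$ follows from \eqref{eq_ine1}. The gap is the step you call the main obstacle, $\bar Y_{2,0}\le Y_{2,0}$. Your minimizer/blow-up argument for it is only sketched and rests on unproved claims.

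\begin{itemize}
\item Corollary \ref{thmglobal1} is not uniform in $\varepsilon$. Its lower bound on $u$ comes from requiring $e^{-(n-4)\varepsilon m/2}\le C$. That gives only $\varepsilon|\min u_\varepsilon|\le C$, not $\varepsilon\|u_\varepsilon\|_\infty\to0$.
\item The dichotomy you invoke would need a full compactness and Liouville-type analysis for $\sigma_2(g)=c\,e^{2\varepsilon u}$ as $\varepsilon\to0$. The two branches are ``no concentration below the sphere value'' and ``bubble energy at least the sphere constant.'' You assert both without proof.
\end{itemize}

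None of this is needed. The H\"older inequality behind \eqref{eq_ine1} holds metric by metric. For every $g=e^{-2u}g_0$ with $\int_M\sigma_2(g)\,dvol(g)\ge 0$,
\[
\tilde{\mathcal F}_{2,\varepsilon}(g)\ge c_\varepsilon\,\tilde{\mathcal F}_{2,0}(g),\qquad c_\varepsilon:=\Bigl(\int_M dvol(g_0)\Bigr)^{-\frac{2\varepsilon(n-4)}{n(n-2\varepsilon)}}\longrightarrow 1 .
\]
On $\mathcal C_2([g_0])$ the sign condition is automatic. Taking the infimum over $\mathcal C_2([g_0])$ gives $\bar Y_\varepsilon\ge c_\varepsilon\bar Y_{2,0}$, hence $\liminf_{\varepsilon\to0}\bar Y_\varepsilon\ge\bar Y_{2,0}$. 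You already know $\bar Y_\varepsilon=Y_\varepsilon\to Y_{2,0}$, so $Y_{2,0}\ge\bar Y_{2,0}$, which closes the chain.

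This is exactly what the paper means by ``the corresponding statement for $\bar Y_\varepsilon$ is proved in the same way.'' Both limits are computed with the same two bounds: pointwise convergence from above and H\"older from below. The identity $Y_{2,0}=\bar Y_{2,0}$ is then read off from $Y_\varepsilon=\bar Y_\varepsilon$.

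Your attribution of $\limsup$ to the pointwise-convergence bound and $\liminf$ to the H\"older bound is the correct one. The paper's write-up interchanges these two labels, which is harmless.
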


\begin{proof}
Fix $g\in{\cal C}_1([g_0])$. Then
\[
\lim_{\e\to 0}\tilde{\cal F}_{2,\e}(g)=\tilde{\cal F}_{2,0}(g).
\]
Taking the infimum over ${\cal C}_1([g_0])$ gives
\[
Y_{2,0}([g_0])\le \liminf_{\e\to 0}Y_\e([g_0]).
\]
On the other hand, by \eqref{eq_ine1} we have $Y_\e([g_0])\ge Y_{2,0}([g_0])\cdot(1+o(1))$ as $\e\to 0$, and therefore
\[
\limsup_{\e\to 0}Y_\e([g_0])\ge Y_{2,0}([g_0]).
\]
Combining these inequalities yields
\[
\lim_{\e\to 0}Y_\e([g_0])=Y_{2,0}([g_0]).
\]
The corresponding statement for $\bar Y_\e([g_0])$ is proved in the same way, and the identity $Y_{2,0}([g_0])=\bar Y_{2,0}([g_0])$ follows from Lemma~\ref{mainthmbis}.
\end{proof}

\begin{proof}[Proof of Theorem~\ref{mainthm}]
It remains to show that if $\mathcal C_2([g_0])\neq\emptyset$, then \eqref{eq0.3twoinvariantequ} holds. Under this assumption, by \cite{GLWJDG} one has $Y_{2,1}(M,[g_0])>0$. It is also well known that $Y_{1,0}([g_0])>0$. For any $g\in\mathcal C_1([g_0])$ we estimate
\[
\begin{array}{rcl}
\ds\vs\frac{\int_M \s_2(g)\,dvol(g)}{\left(\int_M \s_0(g)\,dvol(g)\right)^{\frac{n-4}{n}}}
&=&\ds\frac{\int_M \s_2(g)\,dvol(g)}{\left(\int_M \s_1(g)\,dvol(g)\right)^{\frac{n-4}{n-2}}}\cdot
\frac{\left(\int_M \s_1(g)\,dvol(g)\right)^{\frac{n-4}{n-2}}}{\left(\int_M \s_0(g)\,dvol(g)\right)^{\frac{n-4}{n}}}\\
&\ge& Y_{2,1}([g_0])\left(Y_{1,0}([g_0])\right)^{\frac{n-4}{n-2}}.
\end{array}
\]
It follows that
\begin{equation}
Y_{2,0}([g_0])\ge Y_{2,1}([g_0])\left(Y_{1,0}([g_0])\right)^{\frac{n-4}{n-2}}>0.
\end{equation}
\end{proof}

\section{Examples}
\label{AppendixA}
In this section we present examples illustrating what can go wrong without the assumption of positive scalar curvature.

Let $g_0=g_{\mathbb{S}^{n}}=\frac{1}{1-s^{2}}ds^{2}+\left(1-s^{2}\right)g_{\mathbb{S}^{n-1}},\,\, s=x_{n+1}$.
Consider the metric $g_{\ell }=e^{-2\ell s^{2}}g_{\mathbb{S}^{n}}.$
Recall 
\[
|\nabla_{g_{0}}s|^{2}=1-s^{2}\text{ and }\nabla_{g_{0}}^{2}s=-sg_{0}.
\]
For a conformal change $g_{u}=e^{2u}g_{0}$, the Schouten tensor transforms
as 
\[
A_{g_{u}}=A_{g_{0}}-\nabla^{2}u+du\otimes du-\frac{1}{2}|\nabla_{g_{0}}u|^{2}g_{0}.
\]

Let
\[
u_{\ell }(s):=-\ell s^{2},\quad g_{\ell }:=e^{2u_{\ell }}g_{0}=e^{-2\ell s^{2}}g_{0}.
\]
We have
\[
\nabla_{g_{0}}u=u'\nabla_{g_{0}}s
,\quad 
|\nabla_{g_{0}}u|^{2}=\left(u^{\prime}\right)^{2}|\nabla s|^{2}=4\ell ^{2}s^{2}\left(1-s^{2}\right),
\]
\[
\nabla_{g_{0}}^{2}u=u''\nabla_{g_{0}}s\otimes\nabla_{g_{0}}s-su'g_{0}.
\]
The eigenvalues of $g_{0}^{-1}A_{g_{\ell }}$ are
\[
\begin{gathered}\lambda_{1}(s)=\frac{1}{2}+2\ell -4\ell s^{2}+2\ell ^{2}s^{2}\left(1-s^{2}\right),\\
\lambda_{2}(s)=\frac{1}{2}-2\ell s^{2}-2\ell ^{2}s^{2}\left(1-s^{2}\right),
\end{gathered}
\]
where $\lambda_{2}(s)$ is the eigenvalue with multiplicity $n-1$.
Hence
\[
\sigma_{2}(g_{0}^{-1}A_{g_{\ell }})=\frac{(n-1)(n-2)}{2}\lambda_{2}^{2}+(n-1)\lambda_{1}\lambda_{2}
\]
and 
\begin{align*}
\s_2(g_\ell )dv_{g_{\ell }}=\sigma_{2}\left(g_{\ell }^{-1}A_{g_{\ell }}\right)dv_{g_{\ell }}&=e^{(n-4)u_{\ell }}\sigma_{2}\left(g_{0}^{-1}A_{g_{\ell }}\right)dv_{g_{0}}\\
&=e^{-(n-4)\ell s^{2}}\left(\frac{(n-1)(n-2)}{2}\lambda_{2}^{2}+(n-1)\lambda_{1}\lambda_{2}\right)dv_{g_{0}}.
\end{align*}
Since
\[dv_{g_0}=
dv_{g_{\mathbb{S}^{n}}}=\left(1-s^{2}\right)^{\frac{n-2}{2}}dv_{g_{\mathbb{S}^{n-1}}}ds,
\]
we have 
\begin{align*}
 & \int_{\mathbb{S}^{n}}\sigma_{2}(g_\ell )dv_{g_{\ell }}\\
 =& \omega_{n-1}\int_{-1}^{1}e^{-(n-4)\ell s^{2}}\left(\frac{(n-1)(n-2)}{2}\lambda_{2}^{2}+(n-1)\lambda_{1}\lambda_{2}\right)\left(1-s^{2}\right)^{\frac{n-2}{2}}ds\\
= &2\omega_{n-1}\int_{0}^{1}e^{-(n-4)\ell s^{2}}\left(1-s^{2}\right)^{\frac{n-2}{2}}\left(\frac{1}{2}-2\ell s^{2}-2\ell ^{2}s^{2}\left(1-s^{2}\right)\right)\\
 &\left((n-1)\left(\frac{1}{2}+2\ell -4\ell s^{2}+2\ell ^{2}s^{2}\left(1-s^{2}\right)\right)+\frac{(n-1)(n-2)}{2}\left(\frac{1}{2}-2\ell s^{2}-2\ell ^{2}s^{2}\left(1-s^{2}\right)\right)\right)ds.
\end{align*}

%\subsection{\texorpdfstring{$n\ge 5$}{n >= 5}}
For $n\ge 5$,
Taking $\ell s^2=t^2$, for $\ell$ sufficiently large we obtain
\begin{align*}
&\int_{\mathbb{S}^{n}}\sigma_{2}(g_\ell )\,dv_{g_{\ell }}\\
=& 2\omega_{n-1} \ell ^{3/2} \int_0^{\sqrt{\ell }}(-2t^2)\bigl(2(n-1)-(n-1)(n-4)t^2\bigr)e^{-(n-4)t^2}\,dt+O({\sqrt{\ell }})\\
=&-(n-1)\omega_{n-1} \ell ^{3/2}\frac{\sqrt{\pi}}{2(n-4)^{3/2}}+O({\sqrt{\ell }})<0,
\end{align*}
where $\int_0^{\infty} t^2 e^{-a t^2}\,dt=\frac{\sqrt{\pi}}{4 a^{3 / 2}}$ and $\int_0^{\infty} t^4 e^{-a t^2}\,dt=\frac{3 \sqrt{\pi}}{8 a^{5 / 2}}$.

%Taking $L=2000$ and $n=5$, using Matlab, \[\int_{\mathbb{S}^{5}}\sigma_{2}\left(g_{L}^{-1}A_{g_{L}}\right)dv_{g_{L}}\cong-1027055.2985998045<0.\]
%For $n=4$, $$\int_{\mathbb{S}^{4}}\sigma_{2}\left(g_{L}^{-1}A_{g_{L}}\right)dv_{g_{L}}=4\pi^2.$$

For the scalar curvature, when $\ell$ is large,
\begin{align*}
   \frac{1}{2(n-1)} \int_{\mathbb{S}^n}R_{g_\ell } dv_{g_\ell }
    =&2\omega_{n-1}\int_0^1
e^{-(n-2)\ell s^2}(1-s^2)^{\frac{n-2}{2}}\\
&\left(\frac{n}{2}+2\ell -2\ell (n+1)s^2+(4-2n)\ell ^2s^2(1-s^2)\right)ds\\
=& 4\omega_{n-1}\sqrt{\ell }\int_0^{\infty}e^{-(n-2)t^2}(1+(2-n)t^2)dt+O(\frac{1}{\sqrt{\ell }})\\
=&\omega_{n-1}\sqrt{\ell }\frac{\sqrt{\pi}}{\sqrt{n-2}}+O(\frac{1}{\sqrt{\ell }}).
\end{align*}
For the volume, when $\ell$ is large,
\begin{align*}\label{vol for large \ell }
\mathrm{Vol}(g_\ell )=&\omega_{n-1}\int_{-1}^1 e^{-n\ell s^2}(1-s^2)^{\frac{n-2}{2}}\,ds\\
=&2\frac{\omega_{n-1}}{\sqrt{\ell }}\int_0^{\sqrt{\ell }}e^{-nt^2}\left(1-\frac{t^2}{\ell }\right)^{\frac{n-2}{2}}\,dt
=\frac{\omega_{n-1}\sqrt{\pi}}{\sqrt{n\ell }}+O(\ell^{-3/2}).
\end{align*}
Here we used $\int_{-\infty}^{\infty} e^{-as^2}\,ds=\sqrt{\frac{\pi}{a}}$ for $a>0$.

Consequently,
\[\frac{\int_{\mathbb{S}^{n}}\sigma_{2}(g_\ell )\,dv_{g_{\ell }}}{\mathrm{Vol}(g_\ell )^{\frac{n-4}{n}}}\rightarrow -\infty, \quad \text{as}\quad  \ell \rightarrow +\infty,\]
and
\[\frac{\int_{\mathbb{S}^{n}}\sigma_{2}(g_\ell )\,dv_{g_{\ell }}}{\left(\int_{\mathbb{S}^n}R_{g_\ell }\,dv_{g_\ell }\right)^{\frac{n-4}{n-2}}}\rightarrow -\infty, \quad \text{as}\quad  \ell \rightarrow +\infty.\]
This example also indicates that, without the assumption of positive scalar curvature, the following infima cannot be achieved:
\[\inf_{g\in[g_0]}\frac{\int_{\mathbb{S}^{n}}\sigma_{2}(g)\, dv_{g}}{\mathrm{Vol}(g)^{\frac{n-4}{n}}},\quad \inf_{g\in[g_0]}\frac{\int_{\mathbb{S}^{n}}\sigma_{2}(g)\,dv_{g}}{\left(\int_{\mathbb{S}^n}R_g\,dv_{g}\right)^{\frac{n-4}{n-2}}}.\]
In fact, one can prove
\begin{prop} Let $n\ge 5$.  Any $n$-dimensional manifold  has \[\inf_{g\in[g_0]}\frac{\int_{M}\sigma_{2}\left(g\right)dv_{g}}{\left ( Vol(g) \right)^{\frac {n-4}n}}=-\infty, \quad 
\inf_{g\in[g_0]}\frac{\int_M\sigma_{2}\left(g\right)dv_{g}}{Vol(g)^{\frac{n-4}{n}}}=-\infty.
\]
\end{prop}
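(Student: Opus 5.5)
Our plan is to localize the sphere computation above. Since the statement is conformally invariant and purely local in nature, we will build, on an arbitrary $(M^n,g_0)$ with $n\ge5$, a family of conformal metrics $g_\ell=e^{2u_\ell}g_0$ that concentrate the "bad" part of the example near a hypersurface (or a point). The two infima in the statement coincide as written, so only $\inf_{g\in[g_0]}\int_M\s_2(g)\,dv_g/\mathrm{Vol}(g)^{\frac{n-4}{n}}=-\infty$ needs proof. We also expect the same family to handle the variant with $\left(\int_M R_g\,dv_g\right)^{\frac{n-4}{n-2}}$ in the denominator.

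\textbf{Construction.} Fix a point $p$ and normal coordinates on a small ball $B_{2\d}(p)$. Let $s=x_n$ be a coordinate function, so the model is a thin Gaussian slab $\{|s|\lesssim \ell^{-1/2}\}$. Choose a cutoff $\eta$ supported in $B_{2\d}$ with $\eta=1$ on $B_\d$, and set
\[
u_\ell=\eta\,(-\ell s^2)-\Lambda(1-\eta),
\]
or more simply $u_\ell=-\ell\,\eta(x) s^2$ plus a large negative constant away from the ball. The point is to make the volume and curvature contributions outside $B_\d$ negligible compared with those of the slab. Then we use the transformation law
\[
A_{g_\ell}=A_{g_0}-\n^2u+du\otimes du-\tfrac12|\n u|^2g_0
\]
exactly as in the sphere computation. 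Inside $B_\d$ we have $\n u=-2\ell s\,\n s$ and $\n^2u=-2\ell\,ds\otimes ds+O(\ell|s|)$. Writing $t=\sqrt\ell\, s$, the leading eigenvalues of $g_0^{-1}A_{g_\ell}$ are of order $\ell$:
\[
\l_1\approx 2\ell+2\ell t^2,\qquad \l_2\approx -2\ell t^2 \ \text{(multiplicity } n-1).
\]
All error terms come from $A_{g_0}$ and the metric deviating from Euclidean, and are $O(1+\ell|s|+\ell^2 s^2|x|^2)$.

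\textbf{Asymptotics.} We integrate $e^{(n-4)u_\ell}\s_2(g_0^{-1}A_{g_\ell})\,dv_{g_0}$ over the slab, first in $s$ and then over the $(n-1)$-dimensional cross-section of $B_\d$. This gives
\[
\mathrm{Vol}_{n-1}\cdot \ell^{3/2}\int_{\R}(-2t^2)\bigl(2(n-1)-(n-1)(n-4)t^2\bigr)e^{-(n-4)t^2}dt\;(1+o(1)),
\]
which is $-c_n\ell^{3/2}$ with $c_n>0$ by the same Gaussian moment identities used for $\mathbb S^n$. The volume is $\mathrm{Vol}(g_\ell)\approx C\ell^{-1/2}$ plus the contribution from $M\setminus B_\d$, which is $O(e^{-n\Lambda})$ and can be made $\le \ell^{-1/2}$. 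Hence the quotient is at most $-c\,\ell^{3/2}\ell^{(n-4)/(2n)}\to-\infty$.

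\textbf{Main obstacle.} The hardest part is controlling the transition region $\d\le|x|\le2\d$ and the errors. Two things must be checked. First, in the transition annulus $u_\ell$ has gradients of size $\ell$ but sits where $e^{(n-4)u_\ell}\le e^{-(n-4)\ell s^2}$ remains Gaussian in $s$; the $\s_2$-density there is bounded by $O(\ell^2)e^{-(n-4)\ell s^2}$, which integrates to $O(\ell^{3/2})$ times the annulus cross-section. This has the same order as the main term, so it must be handled in one of two ways. Either we shrink the annulus relative to $B_\d$, so that its cross-sectional area is a small fraction of the main term's. Or, more simply, we interpolate $\eta$ only in the tangential variables: then the leading $s$-structure is unchanged, and $\s_2$ keeps the same negative leading density $\eta$-weighted, with the tangential derivatives of $\eta$ contributing only lower order $O(\ell)$. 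Second, the curvature of $g_0$ and the metric errors must contribute only $O(\ell^{1/2})$ after integration, which follows since they are lower order in $\ell$. The far region $M\setminus B_{2\d}$ is handled by taking $u_\ell\equiv -\ell\d^2$ there, which makes its contribution exponentially small. With these choices the negative $\ell^{3/2}$ term dominates and the proposition follows.
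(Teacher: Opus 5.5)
The localized-slab construction $u_\ell=-\ell\phi$, $\phi=\eta s^2+(1-\eta)\delta^2$, can be made to work, and it differs from the paper's one-line hint. But your treatment of the transition annulus, the step you yourself identify as the crux, is wrong.

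\textbf{The annulus estimate fails.} In $B_{2\delta}\setminus B_\delta$ one has $du_\ell=-\ell\bigl(2\eta s\,ds-(\delta^2-s^2)\,d\eta\bigr)$. So near $\{s=0\}$ the gradient is $\approx\ell\delta^2|d\eta|$, of size $\ell$ as you say. Then $du_\ell\otimes du_\ell-\frac12|du_\ell|^2g_0$ has eigenvalues $\pm\frac12|du_\ell|^2\sim\ell^2$, while $\nabla^2u_\ell=O(\ell)$. Hence $\sigma_2(g_0^{-1}A_{g_\ell})\approx\frac{(n-1)(n-4)}{8}|du_\ell|^4\sim\ell^4>0$ there, not $O(\ell^2)$. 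Using only the weight $e^{-(n-4)\ell s^2}$, as you do, the annulus bound becomes $O(\ell^{7/2})$ with a positive integrand, which cannot be absorbed by $-c_n\ell^{3/2}$.

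Neither of your remedies helps:
\begin{itemize}
\item Shrinking the annulus to width $w$ makes $|d\eta|\sim 1/w$, so the density gets larger.
\item A tangential cutoff $\eta(y)$ still produces the tangential gradient $\ell(\delta^2-s^2)\,d\eta$ of size $\ell$, as long as the term $-\ell\delta^2(1-\eta)$ is kept.
\item Dropping that term, $u_\ell=-\ell\eta(y)s^2$ does not match the constant $-\ell\delta^2$ outside, so it needs a further cutoff in $s$ with the same difficulty. For this choice the inequality $e^{(n-4)u_\ell}\le e^{-(n-4)\ell s^2}$ is also false.
\end{itemize}

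\textbf{What fixes it.} The construction is rescued by exactly the factor your bound throws away: $e^{(n-4)u_\ell}=e^{-(n-4)\ell\phi}$ with $\phi\ge\max\{(1-\eta)\delta^2,\ \min(s^2,\delta^2)\}$.
\begin{itemize}
\item Since $\phi\ge 0$ is smooth, $|d\phi|^2\le C\phi$. Hence $|g_0^{-1}A_{g_\ell}|\le C\ell(1+\ell\phi)$ and $|\sigma_2(g_0^{-1}A_{g_\ell})|\,e^{(n-4)u_\ell}\le C\ell^2e^{-c\ell\phi}$ everywhere.
\item Consequently the set $\{1-\eta\ge\varepsilon_0\}$ contributes only $O(\ell^2e^{-c\varepsilon_0\delta^2\ell})$.
\item The rim $\{0<1-\eta<\varepsilon_0\}\subset B_{r_0}\setminus B_\delta$ contributes at most $C\ell^{3/2}(r_0-\delta)+O(\ell^2e^{-c\ell\delta^2})$. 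Take $\eta$ radial and strictly decreasing on $(\delta,2\delta)$, so that $r_0\downarrow\delta$ as $\varepsilon_0\to0$. Then for small $\varepsilon_0$ this is a small fraction of the main term $\approx -c_n\ell^{3/2}|B^{n-1}_\delta|$.
\end{itemize}
This Laplace-type localization is the missing step.

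\textbf{Two smaller points.}
\begin{itemize}
\item The error $O(\ell^2s^2|x|^2)=O(\ell t^2\delta^2)$ is not lower order in $\ell$. It is harmless only because, to leading order, it multiplies both eigenvalues by $|\nabla s|^2_{g_0}$ (alternatively take $s$ a signed distance), so the sign of the $t$-integral is unchanged.
\item No smallness of the volume is needed, since $u_\ell\le 0$ gives $\mathrm{Vol}(g_\ell)\le\mathrm{Vol}(g_0)$.
\end{itemize}

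\textbf{Comparison with the paper.} The paper only says to glue the sphere family near a point. A natural implementation fixes $\ell$ and transplants a rescaled copy of $(\mathbb S^n\setminus\{N\},g_\ell)$ into a shrinking ball. The cutoff then sits in a small cap around $N$, where $g_\ell\approx e^{-2\ell}g_{\mathbb S^n}$ and the cutoff contributions vanish as the ball shrinks. This uses the sphere computation as a black box, and no rim arises. In your approach, concentrating the slab along a closed hypersurface (e.g.\ a small geodesic sphere, with $s$ the signed distance) would likewise remove the rim entirely.
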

\begin{proof}
    The idea is to glue the family of metrics discussed above near a fixed point in $M$. We leave the proof to the interested reader.
\end{proof}

%%%%%%%%%%%%%%%%%%%%%%%%%%%%

\bibliography{bibforSigmakSobolev}
\bibliographystyle{amsplain}

\end{document}